\def\A{\mathcal{A}}
\def\P{\mathcal{P}}
\def\I{\mathcal{I}}
\def\C{\mathcal{C}}
\def\E{\mathbb{E}}
\def\s{\mathfrak{s}}
\def\id{\mathrm{id}}
\def\op{^\mathrm{op}}
\def\Ab{\mathsf{Ab}}
\def\del{\delta}
\def\dr{\ar@{->}[r]}
\def\X{\mathscr{X}}
\def\add{\mbox{add}}
\def\Hom{\mbox{Hom}}
\newcommand{\CC}{{\bf{C}}^{n+2}_{\C}}
\newcommand{\ov}{\overset}
\newcommand{\lra}{\longrightarrow}
\newcommand{\co}{\colon}
\newcommand{\uas}{^{\ast}}            
\newcommand{\sas}{_{\ast}}
\newcommand{\Xd}{\langle X_{\bullet},\del\rangle}  
\newcommand{\ush}{^\sharp}            
\newcommand{\ssh}{_\sharp}
\begin{document}
\baselineskip=15pt
\title{\Large{\bf Localization of $n$-exangulated categories \footnotetext{\hspace{-1em} Jian He was supported by the National Natural Science Foundation of China (Grant No. 12171230). Panyue Zhou was supported by the National Natural Science Foundation of China (Grant No. 11901190).} }}
\medskip
\author{Jian He, Jing He and Panyue Zhou}

\date{}

\maketitle
\def\blue{\color{blue}}
\def\red{\color{red}}

\newtheorem{theorem}{Theorem}[section]
\newtheorem{lemma}[theorem]{Lemma}
\newtheorem{corollary}[theorem]{Corollary}
\newtheorem{proposition}[theorem]{Proposition}
\newtheorem{conjecture}{Conjecture}
\theoremstyle{definition}
\newtheorem{definition}[theorem]{Definition}
\newtheorem{question}[theorem]{Question}
\newtheorem{remark}[theorem]{Remark}
\newtheorem{remark*}[]{Remark}
\newtheorem{example}[theorem]{Example}
\newtheorem{example*}[]{Example}
\newtheorem{condition}[theorem]{Condition}
\newtheorem{condition*}[]{Condition}
\newtheorem{construction}[theorem]{Construction}
\newtheorem{construction*}[]{Construction}

\newtheorem{assumption}[theorem]{Assumption}
\newtheorem{assumption*}[]{Assumption}

\baselineskip=17pt
\parindent=0.5cm

\begin{abstract}
\begin{spacing}{1.2}
Nakaoka--Ogawa--Sakai considered the localization of an extriangulated category. This construction unified the Serre quotient of abelian categories and the Verdier quotient of triangulated categories. Recently, Herschend--Liu--Nakaoka defined $n$-exangulated categories as a higher dimensional analogue of extriangulated categories. Let $\C$ be an $n$-exangulated category and $\mathcal{F}$  be a multiplicative system satisfying
mild assumption. In this article, we give a necessary and sufficient condition for the localization of $\C$ be an $n$-exangulated category. This way gives a new class of $n$-exangulated categories which are neither $n$-exact nor $(n+2)$-angulated in general.
Moreover, our result also generalizes work by Nakaoka--Ogawa--Sakai.\\[0.2cm]
\textbf{Keywords:} $n$-exangulated categories; extriangulated categories; localization
\\[0.1cm]
\textbf{2020 Mathematics Subject Classification:} 18G80; 18E35; 18E10 \end{spacing}
\end{abstract}

\pagestyle{myheadings}
\markboth{\rightline {\scriptsize J. He, J. He and P. Zhou }}
         {\leftline{\scriptsize Localization of $n$-exangulated categories  }}

\section{Introduction}
The notion of extriangulated categories was introduced in \cite{NP}, which can be viewed as a simultaneous generalization of exact categories and triangulated categories.
 The data of such a category is a triplet $(\C,\E,\s)$, where $\C$ is an additive category, $\mathbb{E}\colon \C^{\rm op}\times \C \rightarrow \Ab$ is an additive bifunctor and $\mathfrak{s}$ assigns to each $\delta\in \mathbb{E}(C,A)$ a class of $3$-term sequences with end terms $A$ and $C$ such that certain axioms hold. Recently, Herschend--Liu--Nakaoka \cite{HLN}
introduced the notion of $n$-exangulated categories for any positive integer $n$. It should be noted that the case $n=1$ corresponds to
extriangulated categories. As typical examples we know that $n$-exact categories and $(n+2)$-angulated categories are $n$-exangulated categories, see \cite[Proposition 4.34]{HLN} and \cite[Proposition 4.5]{HLN}. However, there are some other examples of $n$-exangulated categories which are neither $n$-exact nor $(n+2)$-angulated, see \cite{HLN, HLN1, LZ,HZZ2}.

It is well known that the localization can be performed in a satisfactory generality for abelian/triangulated categories. In order to unify the theory of localization, Nakaoka-Ogawa-Sakai \cite{HYA} discussed about localizations of extrianguated categories. More specifically,  Nakaoka-Ogawa-Sakai \cite[Theorem 3.5]{HYA} proved the localization of an extriangulated category by a multiplicative system satisfying mild assumptions can be equipped with a natural, universal structure of an extriangulated category. This construction unified the Serre quotient of abelian categories and the Verdier quotient of triangulated categories. Indeed, they also gave such a construction for a bit wider class of morphisms, so that it covered several other localizations appeared in the literature, such as Rump's localization of exact categories by biresolving subcategories, localizations of extriangulated categories by means of Hovey twin cotorsion pairs, and the localization of exact categories by two-sided admissibly percolating subcategories.
Based on this idea, we prove the main result in the article, which is a higher counterpart of Nakaoka-Ogawa-Sakai's result. Moreover, the construction gives $n$-exangulated categories which are neither
$n$-exact nor $(n+2)$-angulated in general.

Let $(\mathcal{C},\mathbb{E},\mathfrak{s})$  be an $n$-exangulated category. Assume that $\mathcal{N}_{\mathcal{F}}\subseteq \C$ is an additive full subcategory associated to $\mathcal{F}$, and $\overline{\mathcal{F}}$ denotes a set of morphisms in the ideal quotient $\overline\C=\C/[\mathcal{N}_{\mathcal{F}}]$ obtained from $\mathcal{F}$ by taking closure with respect to the composition with isomorphisms in $\overline\C$. Our main result is the following.
\begin{theorem}\rm{ (see Theorem \ref{th} for details)}
Let $\mathcal{F}$ be a set of morphisms in $\C$ containing all isomorphisms and closed by compositions. Also, $\mathcal{F}$ is closed by taking finite direct sums.
Suppose that $\overline{\mathcal{F}}$ satisfies the following conditions in $\overline\C$:
\begin{enumerate}[leftmargin=3.4em]
  \item[(MR1)] $\overline{\mathcal{F}}$ satisfies 2-out-of-3 with respect to compositions.
  \item[(MR2)] $\overline{\mathcal{F}}$ is a multiplicative system.
  \item[(MR3)] Let $$X_{0}\xrightarrow{f_{0}} X_{1} \xrightarrow{f_{1}}X_{2}\xrightarrow{f_{2}} \cdots\xrightarrow{f_{n-1}} X_{n}\xrightarrow{f_{n}} X_{n+1}\overset{\delta}{\dashrightarrow}$$ and $$Y_{0}\xrightarrow{g_{0}} Y_{1} \xrightarrow{g_{1}}Y_{2}\xrightarrow{g_{2}} \cdots\xrightarrow{g_{n-1}} Y_{n}\xrightarrow{g_{n}} Y_{n+1}\overset{\delta'}{\dashrightarrow}$$ be any pair of distinguished $n$-exangles, and let $a\in \C(X_{0},Y_{0})$, $c\in \C(X_{n+1},Y_{n+1})$ be any pair of morphisms
satisfying $a_{\ast}\delta = c^{\ast}\delta'$. If $\overline a$ and $\overline c$ belong to $\overline{\mathcal{F}}$, then there exists $\mathbf{b_i} \in \overline{\mathcal{F}}(X_{i},Y_{i})$ such that $\mathbf{b_1}\circ \overline{f_{0}} = \overline{g_{0}}\circ \overline a$, $\overline {c}\circ \overline{f_{n}} = \overline{g_{n}}\circ \mathbf{b_n}$ and $\mathbf{b_{i+1}}\circ \overline{f_{i}} = \overline{g_{i}}\circ \mathbf{b_i}$, where $i=1,2,\cdots,n-1$.
\end{enumerate}
Then the localization of $\C$ by $\mathcal{F}$ gives a weakly $n$-exangulated category $(\widetilde{\mathcal{C}},\widetilde{\mathbb{E}},\widetilde{\mathfrak{s}})$ equipped
with an exact functor $(Q,\mu):(\mathcal{C},\mathbb{E},\mathfrak{s})\rightarrow (\widetilde{\mathcal{C}},\widetilde{\mathbb{E}},\widetilde{\mathfrak{s}})$ if and only if any distinguished $n$-exangle in $\C$ induces a weak kernel-cokernel sequence in $\widetilde{\mathcal{C}}$. If $(\widetilde{\mathcal{C}},\widetilde{\mathbb{E}},\widetilde{\mathfrak{s}})$ moreover satisfies {\rm(C4)}, then it is an $n$-exangulated category.
\end{theorem}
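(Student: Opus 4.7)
The plan is to extend the Nakaoka--Ogawa--Sakai construction from the extriangulated case ($n=1$) to general $n$. First, since $\overline{\mathcal{F}}$ is a multiplicative system in $\overline{\C} = \C/[\mathcal{N}_{\mathcal{F}}]$ by (MR2), I would form the calculus-of-fractions localization $\widetilde{\C}$ together with the canonical functor $Q\colon \C \to \widetilde{\C}$. Closure of $\mathcal{F}$ under finite direct sums transfers additivity to $\widetilde{\C}$, while (MR1) supplies the 2-out-of-3 property needed to make fractions well-behaved and will be used repeatedly to compare representatives.

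Next, I would define the bifunctor $\widetilde{\E}\colon\widetilde{\C}^{\mathrm{op}}\times\widetilde{\C}\to\Ab$ as a filtered colimit of the values of $\E$ along roofs in $\overline{\mathcal{F}}$, so that each $\E(C,A)$ maps canonically to $\widetilde{\E}(QC,QA)$. The realization $\widetilde{\s}$ sends a class in $\widetilde{\E}(C,A)$ represented by $\delta\in\E(C',A')$ to the image under $Q$ of any distinguished $n$-exangle realizing $\delta$. Well-definedness up to $n$-exangle isomorphism in $\widetilde{\C}$ is precisely what (MR3) delivers: for any two representatives the condition produces the middle morphisms $\mathbf{b_1},\ldots,\mathbf{b_n}$ needed to identify the two images.

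The bulk of the work is then verifying the axioms of a weakly $n$-exangulated category for $(\widetilde{\C},\widetilde{\E},\widetilde{\s})$ together with the exact-functor structure on $(Q,\mu)$. The main axiom---compatibility of $\widetilde{\s}$ with morphisms between distinguished $n$-exangles---proceeds as follows: given a compatible pair $(a,c)$ in $\widetilde{\C}$ between two $\widetilde{\s}$-realizations, lift $a$ and $c$ to $\overline{\mathcal{F}}$-fractions, bring both $n$-exangles to common representatives in $\C$, invoke (MR3) to produce the intermediate morphisms, and descend back to $\widetilde{\C}$. The functoriality of $Q$ and naturality of $\mu$ are then straightforward from the universal property of localization.

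The weak kernel-cokernel condition captures the remaining ingredient. The ``only if'' direction is forced by the defining axioms of any weakly $n$-exangulated structure. Conversely, assuming it, one checks directly that the $Q$-images of distinguished $n$-exangles furnish the required weak exact sequences, so that combined with the preceding construction $(\widetilde{\C},\widetilde{\E},\widetilde{\s})$ becomes weakly $n$-exangulated. Upgrading to a genuine $n$-exangulated category under (C4) is immediate, since (C4) is precisely the extra axiom separating the two notions. I expect the principal obstacle to be the compatibility step: lifting morphisms in $\widetilde{\C}$ carefully enough to invoke (MR3), together with the bookkeeping to ensure $\widetilde{\s}$ respects the Gabriel--Zisman equivalence on fractions and is additive in both variables.
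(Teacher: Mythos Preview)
Your outline correctly identifies the construction of $\widetilde{\C}$, $\widetilde{\E}$ as a colimit over roofs, and $\widetilde{\s}$ via $Q$-images of realizations, and you are right that (MR3) is exactly what makes $\widetilde{\s}$ well-defined (this is the paper's Lemma~\ref{Lem6}). You also correctly locate the weak kernel--cokernel hypothesis as giving the ``only if'' direction for free and the exactness needed for (C1).

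However, there is a genuine gap in what you call ``the main axiom.'' What you describe---lifting a compatible pair $(a,c)$ to fractions and invoking (MR3) to fill in the intermediate morphisms---is essentially axiom (R0), and it cannot work as stated: (MR3) only applies when $\overline{a}$ and $\overline{c}$ lie in $\overline{\mathcal{F}}$, whereas a general morphism in $\widetilde{\C}$ involves a numerator that need not be in $\overline{\mathcal{F}}$. More importantly, (R0) is not where the work lies; the paper observes that (R0) is a consequence of (C3) and (C3${}^\prime$). The substantive axioms you do not address are precisely (C3) and (C3${}^\prime$): one must show that for $\alpha\in\widetilde{\C}(A,A')$ the pair $(\alpha,\id)$ admits a \emph{good lift}, i.e.\ a lift whose mapping cone is itself an $\widetilde{\s}$-distinguished $n$-exangle. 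The paper handles this by writing $\alpha=\overline{Q}(\overline{u})^{-1}\circ\overline{Q}(\overline{a})$, completing $\overline{a}$ to a square in $\overline{\C}$, applying (C3) in $\C$ to the resulting morphism $a'$ to obtain an honest $\s$-distinguished mapping cone, and then checking via an explicit isomorphism $\beta=\id_{X_1}\oplus Q(s'\circ u)$ that its $Q$-image agrees with the mapping cone formed in $\widetilde{\C}$. None of this is captured by your sketch, and it is the heart of the argument.
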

This article is organized as follows. In Section 2, we review some elementary definitions and facts on (weakly) $n$-exangulated categories. In Section 3, we prove our main result and give an example to explain our main result.

\section{Preliminaries}
Let $\C$ be an additive category which is skeletally small, and $n$ be a positive integer. Suppose that $\C$ is equipped with an additive bifunctor $\E\colon\C\op\times\C\to\Ab$, where $\Ab$ is the category of abelian groups. Next we briefly recall some definitions and basic properties of $n$-exangulated categories from \cite{HLN}. We omit some
details here, but the reader can find them in \cite{HLN}.

{ For any pair of objects $A,C\in\C$, an element $\del\in\E(C,A)$ is called an {\it $\E$-extension} or simply an {\it extension}. We abbreviately express it as $C\overset{\delta}{\dashrightarrow}A$.  We also write such $\del$ as ${}_A\del_C$ when we indicate $A$ and $C$. The zero element ${}_A0_C=0\in\E(C,A)$ is called the {\it split $\E$-extension}. For any pair of $\E$-extensions ${}_A\del_C$ and ${}_{A'}\del{'}_{C'}$, let $\delta\oplus \delta'\in\mathbb{E}(C\oplus C', A\oplus A')$ be the
element corresponding to $(\delta,0,0,{\delta}{'})$ through the natural isomorphism $$\mathbb{E}(C\oplus C', A\oplus A')\simeq\mathbb{E}(C, A)\oplus\mathbb{E}(C, A')
\oplus\mathbb{E}(C', A)\oplus\mathbb{E}(C', A').$$

For any $a\in\C(A,A')$ and $c\in\C(C',C)$,  $\E(C,a)(\del)\in\E(C,A')\ \ \text{and}\ \ \E(c,A)(\del)\in\E(C',A)$ are simply denoted by $a_{\ast}\del$ and $c^{\ast}\del$, respectively.

Let ${}_A\del_C$ and ${}_{A'}\del{'}_{C'}$ be any pair of $\E$-extensions. A {\it morphism} $(a,c)\colon\del\to{\delta}{'}$ of extensions is a pair of morphisms $a\in\C(A,A')$ and $c\in\C(C,C')$ in $\C$, satisfying the equality
$a_{\ast}\del=c^{\ast}{\delta}{'}$.}

\begin{definition}\cite[Definition 2.7]{HLN}
Let $\bf{C}_{\C}$ be the category of complexes in $\C$. As its full subcategory, define $\CC$ to be the category of complexes in $\C$ whose components are zero in the degrees outside of $\{0,1,\ldots,n+1\}$. Namely, an object in $\CC$ is a complex $X_{\bullet}=\{X_i,d^X_i\}$ of the form
\[ X_0\xrightarrow{d^X_0}X_1\xrightarrow{d^X_1}\cdots\xrightarrow{d^X_{n-1}}X_n\xrightarrow{d^X_n}X_{n+1}. \]
We write a morphism $f_{\bullet}\co X_{\bullet}\to Y_{\bullet}$ simply $f_{\bullet}=(f_0,f_1,\ldots,f_{n+1})$, only indicating the terms of degrees $0,\ldots,n+1$.
\end{definition}

\begin{definition}\cite[Definition 2.11]{HLN}
By Yoneda lemma, any extension $\del\in\E(C,A)$ induces natural transformations
\[ \del\ssh\colon\C(-,C)\Rightarrow\E(-,A)\ \ \text{and}\ \ \del\ush\colon\C(A,-)\Rightarrow\E(C,-). \]
For any $X\in\C$, these $(\del\ssh)_X$ and $\del\ush_X$ are given as follows.
\begin{itemize}
\item[\rm(1)] $(\del\ssh)_X\colon\C(X,C)\to\E(X,A)\ :\ f\mapsto f\uas\del$.
\item[\rm (2)] $\del\ush_X\colon\C(A,X)\to\E(C,X)\ :\ g\mapsto g\sas\delta$.
\end{itemize}
We simply denote $(\del\ssh)_X(f)$ and $\del\ush_X(g)$ by $\del\ssh(f)$ and $\del\ush(g)$, respectively.
\end{definition}

 \begin{definition}\cite[Definition 2.9]{HLN}
 Let $\C,\E,n$ be as before. Define a category $\AE:=\AE^{n+2}_{(\C,\E)}$ as follows.
\begin{enumerate}
\item[\rm(1)] A object in $\AE^{n+2}_{(\C,\E)}$ is a pair $\Xd$ of $X_{\bullet}\in\CC$
and $\del\in\E(X_{n+1},X_0)$ satisfying
$$(d_0^X)_{\ast}\del=0~~\textrm{and}~~(d^X_n)^{\ast}\del=0.$$
We call such a pair an $\E$-attached
complex of length $n+2$. We also denote it by
$$X_0\xrightarrow{d_0^X}X_1\xrightarrow{d_1^X}\cdots\xrightarrow{d_{n-2}^X}X_{n-1}
\xrightarrow{d_{n-1}^X}X_n\xrightarrow{d_n^X}X_{n+1}\overset{\delta}{\dashrightarrow}.$$
\item[\rm (2)]  For such pairs $\Xd$ and $\langle Y_{\bullet},\rho\rangle$, a morphism $f_{\bullet}\colon\Xd\to\langle Y_{\bullet},\rho\rangle$ is
defined to be a morphism $f_{\bullet}\in\CC(X_{\bullet},Y_{\bullet})$ satisfying $(f_0)_{\ast}\del=(f_{n+1})^{\ast}\rho$.

We use the same composition and the identities as in $\CC$.
\end{enumerate}
\end{definition}

\begin{definition}\cite[Definition 2.13]{HLN}\label{def1}
An {\it $n$-exangle} is a pair $\Xd$ of $X_{\bullet}\in\CC$
and $\del\in\E(X_{n+1},X_0)$ which satisfies the following conditions.
\begin{enumerate}
\item[\rm (1)] The following sequence of functors $\C\op\to\Ab$ is exact.
$$
\C(-,X_0)\xrightarrow{\C(-,\ d^X_0)}\cdots\xrightarrow{\C(-,\ d^X_n)}\C(-,X_{n+1})\xrightarrow{~\del\ssh~}\E(-,X_0)
$$
\item[\rm (2)] The following sequence of functors $\C\to\Ab$ is exact.
$$
\C(X_{n+1},-)\xrightarrow{\C(d^X_n,\ -)}\cdots\xrightarrow{\C(d^X_0,\ -)}\C(X_0,-)\xrightarrow{~\del\ush~}\E(X_{n+1},-)
$$
\end{enumerate}
In particular any $n$-exangle is an object in $\AE$.
A {\it morphism of $n$-exangles} simply means a morphism in $\AE$. Thus $n$-exangles form a full subcategory of $\AE$.
\end{definition}

\begin{definition}\cite[Definition 2.22]{HLN}\label {111}
Let $\s$ be a correspondence which associates a homotopic equivalence class $\s(\del)=[{}_A{X_{\bullet}}_C]$ to each extension $\del={}_A\del_C$. Such $\s$ is called a {\it realization} of $\E$ if it satisfies the following condition for any $\s(\del)=[X_{\bullet}]$ and any $\s(\rho)=[Y_{\bullet}]$.
\begin{itemize}
\item[{\rm (R0)}] For any morphism of extensions $(a,c)\co\del\to\rho$, there exists a morphism $f_{\bullet}\in\CC(X_{\bullet},Y_{\bullet})$ of the form $f_{\bullet}=(a,f_1,\ldots,f_n,c)$. Such $f_{\bullet}$ is called a {\it lift} of $(a,c)$.
\end{itemize}
In such a case, we simply say that \lq\lq$X_{\bullet}$ realizes $\del$" whenever they satisfy $\s(\del)=[X_{\bullet}]$.
\end{definition}

\begin{definition}\cite[Definition 2.23]{HLN}\label{de2.6}
Let $\E$ and $\CC$ be as before. Assume that we are given a realization $\s$  which associates a homotopic equivalence class $\s(\del)=[{}_A{X_{\bullet}}_C]$ to each extension $\del={}_A\del_C$. We use the following terminology.
\begin{enumerate}
\item[\rm (1)] An $n$-exangle $\Xd$ is called an $\s$-{\it distinguished} $n$-exangle if it satisfies $\s(\del)=[X_{\bullet}]$. We often simply say {\it distinguished $n$-exangle} when $\s$ is clear from the context.
\item[\rm (2)]  An object $X_{\bullet}\in\CC$ is called an {\it $\s$-conflation} or simply a {\it conflation} if it realizes some extension $\del\in\E(X_{n+1},X_0)$.
\item[\rm (3)]  A morphism $f$ in $\C$ is called an {\it $\s$-inflation} or simply an {\it inflation} if it admits some conflation $X_{\bullet}\in\CC$ satisfying $d_0^X=f$.
\item[\rm (4)]  A morphism $g$ in $\C$ is called an {\it $\s$-deflation} or simply a {\it deflation} if it admits some conflation $X_{\bullet}\in\CC$ satisfying $d_n^X=g$.
\end{enumerate}
\end{definition}

\begin{definition}\cite[Definition 2.27]{HLN}
For a morphism $f_{\bullet}\in\CC(X_{\bullet},Y_{\bullet})$ satisfying $f_0=\id_A$ for some $A=X_0=Y_0$, its {\it mapping cone} $M_{_{\bullet}}^f\in\CC$ is defined to be the complex
\[ X_1\xrightarrow{d^{M_f}_0}X_2\oplus Y_1\xrightarrow{d^{M_f}_1}X_3\oplus Y_2\xrightarrow{d^{M_f}_2}\cdots\xrightarrow{d^{M_f}_{n-1}}X_{n+1}\oplus Y_n\xrightarrow{d^{M_f}_n}Y_{n+1} \]
where $d^{M_f}_0=\begin{bmatrix}-d^X_1\\ f_1\end{bmatrix},$
$d^{M_f}_i=\begin{bmatrix}-d^X_{i+1}&0\\ f_{i+1}&d^Y_i\end{bmatrix}\ (1\le i\le n-1),$
$d^{M_f}_n=\begin{bmatrix}f_{n+1}&d^Y_n\end{bmatrix}$.

{\it The mapping cocone} is defined dually, for morphisms $h_{\bullet}$ in $\CC$ satisfying $h_{n+1}=\id$.
\end{definition}

\begin{definition}\cite[Definition 2.32]{HLN}
Let $(\C,\E,\s)$ be a triplet of an additive category $\C$, an additive bifunctor $\E\co\C\op\times\C\to\Ab$, and a realization $\s$ which associates a homotopic equivalence class $\s(\del)=[{}_A{X_{\bullet}}_C]$ to each extension $\del={}_A\del_C$.

\begin{itemize}
\item[{\rm (1)}] $(\C,\E,\s)$ is a  {\it weakly $n$-exangulated category} if it satisfies the following conditions:
\end{itemize}

{\rm (C1)} For any $\s(\del)=[X_{\bullet}]$, the pair $\Xd$ is an $n$-exangle.

{\rm (C2)} For any $A\in\C$, the zero element ${}_A0_0=0\in\E(0,A)$ satisfies
\[ \s({}_A0_0)=[A\ov{\id_A}{\lra}A\to0\to\cdots\to0\to0]. \]

{\rm (C2')} For any $A\in\C$, the zero element ${}_00_A\in\E(A,0)$ satisfies
\[ \s({}_00_A)=[0\to0\to0\to\cdots\to A\ov{\id_A}{\lra}A]. \]

{\rm (C3)} For $\delta\in\E(C,A)$ and $a\in\C(A,B)$, let ${}_B\langle Y_{\bullet},a_{\ast}\delta\rangle_C$ and ${}_A\langle X_{\bullet},\delta\rangle_C$ be distinguished $n$-exangles. Then $(a,\id_C)$ has a {\it good lift} $f_{\bullet}$, in the sense that its mapping cone gives a distinguished $n$-exangle $\langle M^f_{\bullet},(d^Y_n)^{\ast}\delta\rangle$.

{\rm (C3')} Dual of {\rm (C3)}.

\begin{itemize}
\item[{\rm (2)}] $(\C,\E,\s)$ is an {\it $n$-exangulated category} if it is weakly $n$-exangulated category and moreover satisfies the following conditions:
\end{itemize}

{\rm (C4)} Let $A\ov{f}{\lra}B\ov{g}{\lra}C$ be any sequence of morphisms in $\C$. If both $f$ and $g$ are inflations, then so is $g\circ f$. Dually, if $f$ and $g$ are deflations, then so is $g\circ f$.
\end{definition}

\begin{remark}(1) For a (weakly) $n$-exangulated category $(\C,\E,\s)$, we call $\s$ an exact realization of $\E$.

(2) In fact, it is easy to see that the condition (R0) in Definition \ref {111} is negligible since it follows from {\rm (C3)} and {\rm (C3')}.

\end{remark}

\begin{proof} Let $\s(\del)=[X_{\bullet}]$ and $\s(\rho)=[Y_{\bullet}]$ satisfying $a_{\ast}\del=c^{\ast}\rho$. By {\rm (C3)} and {\rm (C3')}, we have the following commutative diagram
$$\xymatrix{
X_{\bullet}& X_0\ar[r]^{}\ar@{}[dr] \ar[d]^{a} &X_1 \ar[r]^{} \ar@{}[dr]\ar@{-->}[d]^{\varphi_1}&X_2 \ar[r]^{} \ar@{}[dr]\ar@{-->}[d]^{\varphi_2}&\cdot\cdot\cdot \ar[r]\ar@{}[dr] &X_{n-1} \ar[r]^{} \ar@{}[dr]\ar@{-->}[d]^{\varphi_{n-1} }&X_{n}\ar[r]^{}\ar@{}[dr]\ar@{-->}[d]^{\varphi_{n}}&{X_{n+1}} \ar@{}[dr]\ar@{=}[d]^{} \ar@{-->}[r]^-{\del} &\\& Y_0\ar[r]^{}\ar@{}[dr] \ar@{=}[d]^{} &Z_1 \ar[r]^{} \ar@{}[dr]\ar@{-->}[d]^{\psi_1}&Z_2 \ar[r]^{} \ar@{}[dr]\ar@{-->}[d]^{\psi_2}&\cdot\cdot\cdot \ar[r]\ar@{}[dr] &Z_{n-1} \ar[r]^{} \ar@{}[dr]\ar@{-->}[d]^{\psi_{n-1}}&Z_n\ar[r]^{} \ar@{}[dr]\ar@{-->}[d]^{\psi_{n}}&{X_{n+1}} \ar@{}[dr]\ar[d]^{c} \ar@{-->}[r]^-{a_{\ast}\del=c^{\ast}\rho} &\\
Y_{\bullet}& Y_0 \ar[r]^{}&Y_1 \ar[r]^{} & Y_2 \ar[r]^{}  & \cdots \ar[r]^{} & {Y_{n-1}}\ar[r]^{}  &{Y_{n}}\ar[r]^{}  &{Y_{n+1}} \ar@{-->}[r]^-{\rho} &}
$$
of distinguished $n$-exangles. So $f_{\bullet}=(a,\psi_1\varphi_1,\ldots,\psi_n\varphi_n,c)$ is a {\it lift} of $(a,c)$.

\end{proof}

\begin{remark}(1) Note that the case $n=1$, the definition of weakly $1$-exangulated category coincides with the definition of weakly extriangulated category in \cite[Definition 5.15]{BBGH}. Moreover, a triplet $(\C,\E,\s)$ is a  $1$-exangulated category if and only if it is an extriangulated category, see \cite[Proposition 4.3]{HLN}.

(2) From \cite[Proposition 4.34]{HLN} and \cite[Proposition 4.5]{HLN},  we know that $(n+2)$-angulated in the sense of Geiss--Keller--Oppermann \cite{GKO} and $n$-exact categories in the sense of Jasso \cite{Ja} are $n$-exangulated categories.
There are some other examples of $n$-exangulated categories
 which are neither $n$-exact nor $(n+2)$-angulated, see \cite{HLN,HLN1,LZ,HZZ2}.
\end{remark}

In this article, we always assume that the following condition, analogous to the (WIC) Condition
in \cite[Condition 5.8]{NP}.
\begin{condition}\label{cd}
Let $f \in \C(A,B)$, $g \in\C(B,C)$ be any composable pair of morphisms. For a (weakly) $n$-exangulated category $(\C,\E,\s)$, consider the following
conditions.

(1) If $g \circ f$ is a deflation, then so is $g$.

(2) If $g \circ f$ is an inflation, then so is $f$.
\end{condition}

\begin{definition}\label{efu} Let $(\C,\E,\s)$, $(\C',\E',\s')$ and $(\C'',\E'',\s'')$ be weakly $n$-exangulated categories.

(1) An additive covariant functor $F:\C\rightarrow\C' $ is called an exact functor if there exists a natural transformation $\Gamma:\mathbb{E}\Longrightarrow\E'\circ (F^{\rm op}\times F )$ of functors $\C^{\rm op}\times \C \rightarrow \Ab$ such that $\s (\delta)=[X_\bullet]$ implies $\s'(\Gamma_{(X_{n+1},X_{0})}(\delta))=[FX_\bullet]$.

(2) If $(F,\phi):(\C,\E,\s)\rightarrow(\C',\E',\s')$ and $(F',\phi'):(\C',\E',\s')\rightarrow(\C'',\E'',\s'')$ are exact functor, then their composition $(F'',\phi'')=(F',\phi')\circ(F,\phi)$ is defined by $F''=F'F$ and $\phi''=(\phi'\circ(F^{\rm op}\times F)).\phi$.

(3) Let $(F,\phi),(G,\psi):(\C,\E,\s)\rightarrow(\C',\E',\s')$ be exact functor. A natural transformation $\eta:(F,\phi)\Longrightarrow(G,\psi)$ of exact functor is a natural transformation $\eta:F\Longrightarrow G$ of additive functors, which satisfies $$(\eta_{X_0})_{\ast}\phi_{X_{n+1},X_0}(\del)=(\eta_{X_{n+1}})^{\ast}\psi_{X_{n+1},X_0}(\del)$$
for any $\delta\in \E(X_{n+1},X_0)$. Horizontal compositions and vertical compositions are defined by those for natural transformations of additive functors.
\end{definition}
\begin{remark}
(1) The above definition of an exact functor in (1) is nothing but that of an $n$-exangulated functor introduced in \cite[Definition 2.32]{B-TS}, applied to weakly $n$-exangulated categories.
\medskip

(2) When $n=1$, Definition \ref{efu} is just Definition 2.11 in \cite{HYA}.
\end{remark}

\begin{proposition}\label{rrr}Let $(F,\phi):(\C,\E,\s)\rightarrow(\C',\E',\s')$ be an exact functor between weakly $n$-exangulated categories. The following statements are equivalent.

{\rm (1)} $F$ is an equivalence of categories and $\phi$ is a natural isomorphism.

{\rm (2)} $(F,\phi)$ is an equivalence of $n$-exangulated categories in the sense that there exist an exact functor $(G,\psi):(\C',\E',\s')\rightarrow(\C,\E,\s)$, natural transformations of exact functors $(\id_\C, \id_\E)\Longrightarrow (G,\psi)\circ(F,\phi)$ and $(F,\phi)\circ(G,\psi)\Longrightarrow (\id_{\C'}, \id_{\E'})$ which have inverses.
\end{proposition}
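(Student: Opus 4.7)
The implication $(2)\Rightarrow(1)$ is essentially formal. Given an exact $(G,\psi)$ and invertible natural transformations of exact functors $\eta\co(\id_\C,\id_\E)\Longrightarrow(G,\psi)\circ(F,\phi)$ and $\varepsilon\co(F,\phi)\circ(G,\psi)\Longrightarrow(\id_{\C'},\id_{\E'})$, the underlying additive natural isomorphisms exhibit $G$ as a quasi-inverse of $F$, so $F$ is an equivalence. Applying the compatibility identity of Definition \ref{efu}(3) to $\eta$ at an arbitrary $\del\in\E(X_{n+1},X_0)$ yields
$(\eta_{X_0})_{\ast}\del = (\eta_{X_{n+1}})^{\ast}\psi_{FX_{n+1},FX_0}(\phi_{X_{n+1},X_0}(\del))$,
expressing $\del$ as a function of $\phi(\del)$ through invertible operations, so $\phi_{X_{n+1},X_0}$ is injective. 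Dually, the identity from $\varepsilon$ provides surjectivity of $\phi_{GC',GA'}$; combined with essential surjectivity of $F$ and naturality of $\phi$, this shows $\phi$ is a natural isomorphism.

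For $(1)\Rightarrow(2)$, choose a quasi-inverse $G\co\C'\to\C$ of $F$ together with natural isomorphisms $\alpha\co\id_\C\xrightarrow{\sim}GF$ and $\beta\co FG\xrightarrow{\sim}\id_{\C'}$. Since $\phi$ is a natural isomorphism by hypothesis, define $\psi\co\E'\Longrightarrow\E\circ(G\op\times G)$ componentwise by
\[
\psi_{C',A'}(\delta')\;:=\;\phi_{GC',GA'}^{-1}\bigl((\beta_{A'})_{\ast}^{-1}(\beta_{C'})^{\ast}\delta'\bigr).
\]
Naturality of $\psi$ in both variables is immediate from that of $\phi$ and $\beta$. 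To verify that $(G,\psi)$ is exact, fix a distinguished $n$-exangle $\langle Y_{\bullet},\delta'\rangle$ in $\C'$ and, using (C1), choose $Z_{\bullet}\in\CC$ with $\s(\psi(\delta')) = [Z_{\bullet}]$. Exactness of $(F,\phi)$ gives $\s'(\phi(\psi(\delta'))) = [FZ_{\bullet}]$; by construction $\phi(\psi(\delta')) = (\beta_{A'})_{\ast}^{-1}(\beta_{C'})^{\ast}\delta'$, so $(\beta_{GA'},\beta_{GC'})$ is a morphism of extensions $\phi(\psi(\delta'))\to\delta'$ whose components are isomorphisms. By (R0) (a consequence of (C3) and (C3'), as noted in the Remarks above) it lifts to a chain map $FZ_{\bullet}\to Y_{\bullet}$, and a short-five-lemma style argument using the functor-exactness sequences of Definition \ref{def1} shows that such a lift is a homotopy equivalence, so $[FZ_{\bullet}] = [Y_{\bullet}]$. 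Applying the additive functor $G$, which preserves homotopy equivalences, and then composing with $\alpha\co Z_{\bullet}\xrightarrow{\sim}GFZ_{\bullet}$, yields $[Z_{\bullet}] = [GY_{\bullet}]$, i.e.\ $\s(\psi(\delta')) = [GY_{\bullet}]$, as required.

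Finally, $\alpha$ and $\beta$ are natural transformations of \emph{exact} functors: the compatibility identity of Definition \ref{efu}(3) reduces, upon unpacking, to the defining formula for $\psi$ combined with the naturality of $\phi$ and $\beta$, and the pointwise inverses $\alpha^{-1}$, $\beta^{-1}$ automatically satisfy the reversed identity. \textbf{Main obstacle.} The only non-routine step is the short-five-lemma claim that a lift between distinguished $n$-exangles with isomorphic outer terms is a homotopy equivalence; for $n=1$ this is classical, and for general $n$ it should follow from the two functor-exactness sequences of Definition \ref{def1} together with Condition \ref{cd}, though some effort is needed to produce an explicit homotopy inverse in the sense of Definition \ref{111}.
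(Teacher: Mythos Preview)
The paper does not give its own proof: it simply writes ``It is similar to \cite[Proposition~2.13]{HYA}, we omit it.'' Your sketch is precisely the adaptation of that $n=1$ argument, so there is no meaningful divergence in approach to report.

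One correction, though, concerning the step you flag as the ``main obstacle''. The claim that a morphism of distinguished $n$-exangles whose outer components are isomorphisms is a homotopy equivalence is a general fact about $n$-exangles, proved in \cite{HLN} (see the discussion surrounding their definition of the homotopy equivalence class $[{}_A X_\bullet{}_C]$, in particular \cite[Propositions~2.18--2.21]{HLN}). The proof uses only the two exactness sequences of Definition~\ref{def1}: one constructs a candidate homotopy inverse via (R0) and then shows the composites are homotopic to identities by producing the homotopies degree by degree, alternating between sequence~(1) for the target and sequence~(2) for the source. Condition~\ref{cd} plays no role here; it concerns closure of inflations/deflations under factors of compositions and is irrelevant to the homotopy-equivalence question. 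Once you replace the appeal to Condition~\ref{cd} by the appropriate citation to \cite{HLN}, your sketch is complete and coincides with what the paper intends.
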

\begin{proof}
It is similar to \cite[Proposition 2.13]{HYA}, we omit it.
\end{proof}

\section{Localization }
Let $\C$ be an additive category. We denote by $\mathcal{M}_{\C}$ the class of all morphisms in $\C$. If a class of morphisms $\mathcal{F}\subseteq \mathcal{M}_{\C}$
is closed by compositions and contains all identities in $\A$, then we may regard
$\mathcal{F}\subseteq\C$ as a (not full) subcategory satisfying $Ob(\mathcal{F}) = Ob(\C)$. With this view in
mind, we write $\mathcal{F}(X,Y) = \{f \in \C(X,Y) | f \in \mathcal{F}\}$ for any $X$, $Y\in\C$.  We denote by $\textrm{Iso}~ \C$ the class of all isomorphisms in $\C$. $\mathcal{N}_{\mathcal{F}}$ denotes the full subcategory consisting of objects $N\in \C$ such that both $N\rightarrow 0$ and $0\rightarrow N$ belong to $\mathcal{F}$. It is obvious that $\mathcal{N}_{\mathcal{F}}\subseteq \C$ is an additive subcategory. In the rest, we will denote the ideal quotient by $p:{\C}\rightarrow\overline{\C}=\C/[\mathcal{N}_{\mathcal{F}}]$, and $\overline{f}$ will denote a morphism in $\overline{\C}$ represented by $f\in\C(X,Y)$. $\overline{\mathcal{F}}$ denotes a set of morphisms in the ideal quotient
$\C/[\mathcal{N}_{\mathcal{F}}]$ obtained from $\mathcal{F}$ by taking closure with respect to the composition with isomorphisms in $\overline\C$. \par
Now, let $(\mathcal{C},\mathbb{E},\mathfrak{s})$  be an $n$-exangulated category and  $\mathcal{F}\subseteq \mathcal{M}_{\C}$  a set of morphisms which satisfies the following conditions.
\begin{enumerate}[leftmargin=3.3em]
  \item[(M0)] $\mathcal{F}$ contains all isomorphisms in $\C$, and closed by compositions. Also, $\mathcal{F}$ is closed by taking finite direct sums. Namely, if $f_{i}\in\mathcal{F}(X_{i},Y_{i})$, for $i=1,2$, then $f_{1}\oplus f_{2}\in\mathcal{F}(X_{1}\oplus X_{2},Y_{1}\oplus Y_{2})$.
  \item[(MR1)] $\overline{\mathcal{F}}$ satisfies 2-out-of-3 with respect to compositions.
  \item[(MR2)] $\overline{\mathcal{F}}$ is a multiplicative system.
  \item[(MR3)] Let $$X_{0}\xrightarrow{f_{0}} X_{1} \xrightarrow{f_{1}}X_{2}\xrightarrow{f_{2}} \cdots\xrightarrow{f_{n-1}} X_{n}\xrightarrow{f_{n}} X_{n+1}\overset{\delta}{\dashrightarrow}$$ and $$Y_{0}\xrightarrow{g_{0}} Y_{1} \xrightarrow{g_{1}}Y_{2}\xrightarrow{g_{2}} \cdots\xrightarrow{g_{n-1}} Y_{n}\xrightarrow{g_{n}} Y_{n+1}\overset{\delta'}{\dashrightarrow}$$ be any pair of distinguished $n$-exangles, and let $a\in \C(X_{0},Y_{0})$, $c\in \C(X_{n+1},Y_{n+1})$ be any pair of morphisms
satisfying $a_{\ast}\delta = c^{\ast}\delta'$. If $\overline a$ and $\overline c$ belong to $\overline{\mathcal{F}}$, then there exists $\mathbf{b_i} \in \overline{\mathcal{F}}(X_{i},Y_{i})$ such that $\mathbf{b_1}\circ \overline{f_{0}} = \overline{g_{0}}\circ \overline a$, $\overline {c}\circ \overline{f_{n}} = \overline{g_{n}}\circ \mathbf{b_n}$ and $\mathbf{b_{i+1}}\circ \overline{f_{i}} = \overline{g_{i}}\circ \mathbf{b_i}$, where $i=1,2,\cdots,n-1$.
\end{enumerate}
\begin{lemma}\rm \cite[Lemma 3.2]{HYA}\label{LemS}
Let $\mathcal{F}$ and $\overline{\C}$ be as above. The following statements hold:

(1) Suppose that $f,g\in\C(A,B)$ satisfy $\overline{f}=\overline{g}$ in $\overline{\C}$. Then $f\in\mathcal{F}$ holds if and only if $g\in\mathcal{F}$.

(2) The following conditions are equivalent.
\begin{itemize}
\item[{\rm (i)}] $p (\mathcal{F})=\overline{\mathcal{F}}$.
\item[{\rm (ii)}] $p^{-1}(\overline{\mathcal{F}})=\mathcal{F}$.
\item[{\rm (iii)}] $p^{-1}(\textrm{Iso}(\overline{\C}))\subseteq \mathcal{F}$.
\item[{\rm (iv)}]  $f\in\mathcal{F}$ holds for any split monomorphism $f\in\C(A,B)$ such that $\overline{f}$ is an isomorphism in $\overline{\C}$.
\item[{\rm (v)}] $f\in\mathcal{F}$  holds for any split epimorphism $f\in\C(A,B)$ such that $\overline{f}$ is an isomorphism in $\overline{\C}$.
\end{itemize}

\begin{remark}
If $\C$ is weakly idempotent complete, then any $\mathcal{F}$ with {\rm (M0)} satisfies {\rm (iv)} and hence all the other equivalent conditions. In particular if $(\mathcal{C},\mathbb{E},\mathfrak{s})$ satisfies Condition \ref{cd}, then any $\mathcal{F}$ with {\rm (M0)} should satisfy $p (\mathcal{F})=\overline{\mathcal{F}}$.
\end{remark}

\end{lemma}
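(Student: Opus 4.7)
For part (1), if $\overline f=\overline g$ then $f-g=\beta\alpha$ factors as $A\xrightarrow{\alpha}N\xrightarrow{\beta}B$ for some $N\in\mathcal{N}_{\mathcal{F}}$, so my plan is to write
\[
f=\begin{pmatrix}g & \beta\end{pmatrix}\circ\begin{pmatrix}1_A\\ \alpha\end{pmatrix}\colon A\longrightarrow A\oplus N\longrightarrow B
\]
and to further decompose each factor as a composition of morphisms visibly in $\mathcal{F}$. Indeed, one has $\begin{pmatrix}1_A\\ \alpha\end{pmatrix}=\begin{pmatrix}1_A & 0\\ \alpha & 1_N\end{pmatrix}\circ\begin{pmatrix}1_A\\ 0\end{pmatrix}$, a composition of an automorphism of $A\oplus N$ with $\id_A\oplus(0\to N)$, and symmetrically $\begin{pmatrix}g & \beta\end{pmatrix}=(\id_B\oplus(N\to 0))\circ\begin{pmatrix}1_B & \beta\\ 0 & 1_N\end{pmatrix}\circ(g\oplus 1_N)$. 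By (M0), the direct sums $\id_A\oplus(0\to N)$, $\id_B\oplus(N\to 0)$ and $g\oplus 1_N$ all lie in $\mathcal{F}$ when $g\in\mathcal{F}$; since $\mathcal{F}$ contains isomorphisms and is closed under composition, $f\in\mathcal{F}$ follows, and the reverse implication is symmetric.

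For part (2) I plan the cycle (i) $\Leftrightarrow$ (ii) $\Leftrightarrow$ (iii) $\Leftrightarrow$ (iv), with (v) $\Leftrightarrow$ (iii) by the dual argument. The equivalence (i) $\Leftrightarrow$ (ii) is immediate from part (1), because two morphisms with the same image in $\overline\C$ belong to $\mathcal{F}$ simultaneously. For (ii) $\Rightarrow$ (iii), I would use that $\textrm{Iso}(\overline\C)\subseteq\overline{\mathcal{F}}$: the identity of each object lies in $p(\mathcal{F})$, and $\overline{\mathcal{F}}$ is closed under composition with isomorphisms of $\overline\C$. For (iii) $\Rightarrow$ (ii), any $\overline f\in\overline{\mathcal{F}}$ has the form $\sigma\circ p(h)\circ\tau$ for some $h\in\mathcal{F}$ and $\sigma,\tau\in\textrm{Iso}(\overline\C)$; I would lift $\sigma,\tau$ to $s,t\in\C$, which then lie in $\mathcal{F}$ by (iii), so that $sht\in\mathcal{F}$, and part (1) transfers this to $f\in\mathcal{F}$. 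The implications (iii) $\Rightarrow$ (iv) and (iii) $\Rightarrow$ (v) are trivial.

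The main obstacle will be (iv) $\Rightarrow$ (iii), with (v) $\Rightarrow$ (iii) dual: one has to manufacture a split monomorphism from a general morphism $f$ whose image in $\overline\C$ is an isomorphism. Given such an $f\colon A\to B$, my plan is to lift $\overline f^{-1}$ to $g\in\C(B,A)$ and to factor $1_A-gf=\beta\gamma$ through some $N\in\mathcal{N}_{\mathcal{F}}$, with $\gamma\in\C(A,N)$, $\beta\in\C(N,A)$. Then the pair
\[
\phi=\begin{pmatrix}f\\ \gamma\end{pmatrix}\colon A\longrightarrow B\oplus N,\qquad \psi=\begin{pmatrix}g & \beta\end{pmatrix}\colon B\oplus N\longrightarrow A
\]
satisfies $\psi\phi=gf+\beta\gamma=1_A$, exhibiting $\phi$ as a split monomorphism. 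The key categorical observation is that $\overline{\id_N}=0$ in $\overline\C$, because $\id_N$ factors through $N\in\mathcal{N}_{\mathcal{F}}$; hence $N\cong 0$ in $\overline\C$, and the projection $B\oplus N\to B$ becomes an isomorphism in $\overline\C$ identifying $\overline\phi$ with $\overline f$, so $\overline\phi\in\textrm{Iso}(\overline\C)$. Applying (iv) yields $\phi\in\mathcal{F}$, and then $f=\begin{pmatrix}1_B & 0\end{pmatrix}\circ\phi\in\mathcal{F}$, since $\begin{pmatrix}1_B & 0\end{pmatrix}=\id_B\oplus(N\to 0)\in\mathcal{F}$. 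For (v) $\Rightarrow$ (iii) I would run the dual construction with the split epimorphism $\begin{pmatrix}f & \beta\end{pmatrix}\colon A\oplus N\to B$ and section $\begin{pmatrix}g\\ \gamma\end{pmatrix}$.
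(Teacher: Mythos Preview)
The paper does not supply its own proof of this lemma; it is quoted verbatim from \cite[Lemma~3.2]{HYA} and only followed by a remark, so there is nothing in the present paper to compare against beyond the citation. Your argument is correct and is essentially the standard one: the matrix factorisations in part~(1) and the split-monomorphism trick $\phi=\left(\begin{smallmatrix}f\\ \gamma\end{smallmatrix}\right)$ for (iv)$\Rightarrow$(iii) are exactly what one expects, and all uses of (M0) are legitimate. One small cosmetic point: in your sketch of (v)$\Rightarrow$(iii) you reuse the letters $\beta,\gamma$ from the factorisation of $1_A-gf$, whereas the dual argument needs a factorisation of $1_B-fg$ through some object of $\mathcal{N}_{\mathcal{F}}$; just introduce fresh names there to avoid confusion.
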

Suppose that $\mathcal{F}=p^{-1}(\overline{\mathcal{F}})$.
Notice that a localization $\C\to\widetilde{\mathcal{C}}$ factors uniquely through $p:\C\to\overline{\mathcal{C}}$, and we may regard $\widetilde{\mathcal{C}}$ as a localization of $\overline{\mathcal{C}}$ by $\overline{\mathcal{F}}$.  In this view,
 the objects of $\widetilde{\mathcal{C}}$ is same as $\overline{\C}$ and the morphism set $\Hom_{\widetilde{\mathcal{C}}}(\overline{X}, \overline{Y})$  is a set of  equivalence classes of right roofs which are the diagrams of the form
 $$\xymatrix{&Z\ar@{=>}[dl]_{\overline{s}}\ar[dr]^{\overline{f}}&\\
 \overline{X}&&\overline{Y}    }$$
  or simply written as the right fractions  $\overline{f}/\overline{s}$, where $\overline{s}\in\overline{\mathcal{F}}$ and $\overline{f}\in {\overline{\C}}(\overline{Z},\overline{Y})$.
\begin{definition}
Take a localization $\overline{Q}:\overline{\C}\rightarrow\widetilde{\C}$ of $\overline{\C}$ by the multiplicative system $\overline{\mathcal F}$, and put $Q=\overline{Q}\circ p:\C\rightarrow\widetilde{\C}$. This gives a localization of $\C$ by ${\mathcal F}$.
\end{definition}

Let $(\C, \mathbb{E}, \mathfrak{s})$ be an $n$-exangulated category and $\widetilde{\mathcal{C}}$ as a localization of ${\mathcal{C}}$.
Assume that
$$A_0\xrightarrow{~\alpha_0~}A_1\xrightarrow{~\alpha_1~}A_2\xrightarrow{~\alpha_2~}\cdots\xrightarrow{~\alpha_{n-2}~}A_{n-1}
\xrightarrow{~\alpha_{n-1}~}A_n\xrightarrow{~\alpha_n~}A_{n+1}\overset{\delta}{\dashrightarrow}$$
is a distinguished $n$-exangle in $\C$. This sequence
$$A_0\xrightarrow{{Q(\alpha_0)}}A_1\xrightarrow{Q(\alpha_1)}A_2\xrightarrow{Q(\alpha_2)}\cdots
\xrightarrow{Q(\alpha_{n-2})}A_{n-1}
\xrightarrow{Q(\alpha_{n-1})}A_n\xrightarrow{Q(\alpha_n)}A_{n+1}$$
is called \emph{weak kernel-cokernel sequence} in $\widetilde{\mathcal{C}}$ if the following sequences
$$
\widetilde{\C}(-,A_0)\xrightarrow{\widetilde{\C}(-,\  Q(\alpha_0))}\widetilde{\C}(-,A_1)\xrightarrow{\widetilde{\C}(-,\ Q(\alpha_1))}\cdots\xrightarrow{\widetilde{\C}(-,\ Q(\alpha_{n-1}))}\widetilde{\C}(-,A_n)\xrightarrow{\widetilde{\C}(-,\ Q(\alpha_n))}\widetilde{\C}(-,A_{n+1})
$$
and
$$
\widetilde{\C}(A_{n+1},-)\xrightarrow{\widetilde{\C}(Q({\alpha_n}),\ -)}\widetilde{\C}(A_{n},-)\xrightarrow{\widetilde{\C}(Q({\alpha_{n-1}}),\ -)}\cdots\xrightarrow{\widetilde{\C}(Q({\alpha_{1}}),\ -)}\widetilde{\C}(A_1,-)\xrightarrow{\widetilde{\C}(Q({\alpha_{0}}),\ -)}\widetilde{\C}(A_0,-)
$$
are exact.

Our main result is the following.
\begin{theorem}\label{th}
Let $(\mathcal{C},\mathbb{E},\mathfrak{s})$  be an $n$-exangulated category and  $\mathcal{F}\subseteq \mathcal{M}_{\C}$ satisfies {\rm(M0)}.

{\rm (1)} Suppose that $\overline{\mathcal{F}}$ satisfies {\rm(MR1)}, {\rm(MR2)} and {\rm(MR3)}, then the localization of $\C$ by $\mathcal{F}$ gives a weakly $n$-exangulated category $(\widetilde{\mathcal{C}},\widetilde{\mathbb{E}},\widetilde{\mathfrak{s}})$ equipped
with an exact functor $$(Q,\mu):(\mathcal{C},\mathbb{E},\mathfrak{s})\rightarrow (\widetilde{\mathcal{C}},\widetilde{\mathbb{E}},\widetilde{\mathfrak{s}})$$ if and only if any distinguished $n$-exangle in $\C$ induces a weak kernel-cokernel sequence in $\widetilde{\mathcal{C}}$.

{\rm (2)} If $(\widetilde{\mathcal{C}},\widetilde{\mathbb{E}},\widetilde{\mathfrak{s}})$ moreover satisfies {\rm(C4)}, then it is an $n$-exangulated category if and only if any distinguished $n$-exangle in $\C$ induces a weak kernel-cokernel sequence in $\widetilde{\mathcal{C}}$.

\end{theorem}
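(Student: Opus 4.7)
The plan is to mimic the construction of Nakaoka--Ogawa--Sakai \cite{HYA} for $n=1$, lifting it to the higher-dimensional setting using (MR3) as the essential technical input that replaces the $3\times 3$-lemma-type arguments available in the extriangulated case. The easy direction ($\Rightarrow$) is essentially formal: if $(Q,\mu)$ is exact into a weakly $n$-exangulated category, then any distinguished $n$-exangle $\Xd$ in $\C$ is carried to a distinguished $n$-exangle $\langle QX_\bullet,\mu(\delta)\rangle$ in $\widetilde{\C}$, and axiom (C1) for $\widetilde{\s}$ together with Definition \ref{def1} forces the two associated Hom-sequences to be exact, which is stronger than being a weak kernel-cokernel sequence.

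For the converse, I would first construct the bifunctor $\widetilde{\E}\colon\widetilde{\C}\op\times\widetilde{\C}\to\Ab$ and the natural transformation $\mu\colon\E\Rightarrow\widetilde{\E}\circ(Q\op\times Q)$ via a colimit over right roofs in $\overline{\mathcal{F}}$: concretely,
\[
\widetilde{\E}(X,Y)\;=\;\varinjlim\,\E(X',Y),
\]
where the colimit is indexed by morphisms $\overline{s}\colon X'\to X$ with $\overline{s}\in\overline{\mathcal{F}}$ (symmetrically on the second argument), with $\mu_{C,A}(\delta)$ being the canonical image of $\delta$. Conditions (MR1), (MR2) and the closure of $\mathcal{F}$ under direct sums in (M0) guarantee that this colimit is filtered, bifunctorial and additive. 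The realization $\widetilde{\s}$ is then defined by assigning to $\widetilde{\delta}\in\widetilde{\E}(X,Y)$ represented by some $\delta\in\E(X',Y')$ with $\s(\delta)=[Z_\bullet]$ the homotopic equivalence class $[QZ_\bullet]$ transported along the defining roofs. Here (MR3) is precisely what is needed to show well-definedness: two representatives of the same $\widetilde{\delta}$ are compared via a morphism of $n$-exangles in $\C$ whose end components belong to $\overline{\mathcal{F}}$, and (MR3) supplies middle morphisms in $\overline{\mathcal{F}}$ which become isomorphisms in $\widetilde{\C}$, yielding a homotopy equivalence.

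Next I would verify the axioms (C1), (C2), (C2'), (C3), (C3') for $(\widetilde{\C},\widetilde{\E},\widetilde{\s})$. Axioms (C2) and (C2') are formal consequences of the corresponding axioms in $\C$ together with additivity of $Q$. Axiom (C1) is where the weak kernel-cokernel hypothesis enters: the exactness of
\[
\widetilde{\C}(-,X_0)\to\widetilde{\C}(-,X_1)\to\cdots\to\widetilde{\C}(-,X_{n+1})
\]
and of the dual sequence is exactly the assumed weak kernel-cokernel condition, while exactness at the final nodes $\widetilde{\C}(-,X_{n+1})\xrightarrow{\widetilde{\delta}_\sharp}\widetilde{\E}(-,X_0)$ and $\widetilde{\C}(X_0,-)\xrightarrow{\widetilde{\delta}^\sharp}\widetilde{\E}(X_{n+1},-)$ is extracted from the colimit description of $\widetilde{\E}$ by a roof-calculus argument. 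For (C3) and (C3'), given a morphism of extensions in $\widetilde{\C}$, I would first lift it to a compatible datum in $\C$ using (MR2), apply (C3) (resp.\ (C3')) in $\C$ to produce a good lift whose mapping cone realizes the required distinguished $n$-exangle, and then push the whole diagram through $Q$; the mapping-cone construction commutes with $Q$ since $Q$ is additive.

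The principal obstacle is the well-definedness of $\widetilde{\s}$ together with the verification of (C3)/(C3') in the localization: both require that equivalent roofs yield homotopy-equivalent conflations, and that the good lift constructed in $\C$ survives the passage to $\widetilde{\C}$. This is exactly where (MR3) must be invoked repeatedly, in its full $n$-dimensional strength, to produce the required middle morphisms belonging to $\overline{\mathcal{F}}$. Once all axioms of a weakly $n$-exangulated category are checked, part (2) is immediate: an $n$-exangulated category is by definition a weakly $n$-exangulated category satisfying (C4), so adjoining (C4) to the conclusion of part (1) yields the second equivalence without further work. Finally, the exactness of $(Q,\mu)$ is built into the construction, since $\widetilde{\s}(\mu(\delta))=[QX_\bullet]$ whenever $\s(\delta)=[X_\bullet]$.
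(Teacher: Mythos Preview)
Your proposal is correct and follows essentially the same route as the paper: the bifunctor $\widetilde{\E}$ is built from two-sided roofs (your colimit description is equivalent to the paper's explicit triplet formulation $[\overline{t}\backslash\overline{\delta}/\overline{s}]$, with the paper's intermediate quotient $\overline{\E}=\E/\mathcal{K}$ absorbed into the colimit), the realization $\widetilde{\s}$ is defined by applying $Q$ to a chosen conflation and shown to be well-defined via (MR3), axiom (C1) comes from the weak kernel--cokernel hypothesis plus a roof-calculus argument at the final node, and (C3)/(C3') are obtained by lifting along (MR2), applying (C3) in $\C$, and transporting the mapping cone through $Q$. The only place where you are slightly glib is the last step: the mapping cone in $\widetilde{\C}$ is not literally $Q$ of the mapping cone in $\C$ but differs from it by the explicit isomorphism $\id_{X_1}\oplus Q(s'\circ u)$ coming from the roof for $\alpha$, which the paper writes down; this is a bookkeeping matter rather than a gap.
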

{\bf In order to prove Theorem \ref{th}, we need some preparations as follows.}

\subsection{Construction of $\widetilde{\mathbb{E}}$}
In this section, our goal is shown in the following diagram

$$\begin{tikzpicture}
\draw(-0.7,0.5) node{$\circlearrowleft$};
\draw(-1.5,-0.5) node{$\mathop{\Longrightarrow}\limits^{\wp}$};
\draw(0.2,-0.5) node{$\mathop{\Longrightarrow}\limits^{\overline\mu}$};
\draw(0,0) node{$
\xymatrix@C=1.3cm{{\C^{\rm op}\times \C}\ar@/^3pc/[rr]^{Q^{\rm op}\times Q}\ar[r]^{p^{\rm op}\times p}\ar[dr]_{{\E}}&{\overline{\C}^{\rm op}\times \overline{\C}}\ar[r]^{\overline{Q}^{\rm op}\times \overline{Q}}\ar[d]_{\overline{\E}}&{\widetilde{\C}^{\rm op}\times \widetilde{\C}.} \ar[dl]^{\widetilde{\E}}\\
&\Ab&&}
$};
\end{tikzpicture}$$

In fact, in order to construct an additive bifunctor $\widetilde{\E}\colon\widetilde{\C}^{\rm op}\times \widetilde{\C}\to{\rm Ab}$ and a natural transformation $\mu:\mathbb{E}\Longrightarrow\widetilde{\E}\circ (Q^{\rm op}\times Q )$, we will define an additive bifunctor $\overline{\E}$ and natural transformation $\overline\mu, \wp$ as shown in the figure above. And then we can define $\mu=(\overline\mu\circ(p^{\rm op}\times p)).\wp$.

\begin{lemma}\rm \label{LemE}

For any extension $\delta\in\mathbb{E}(A_{n+1},A_0)$, the following are equivalent.

(1) There exists a morphism $s\in\mathcal{F}(A_0,B_0)$ such that $s\sas\delta=0$.

(2) There exists a morphism $t\in\mathcal{F}(B_{n+1},A_{n+1})$ such that $t\uas\del=0$.

\end{lemma}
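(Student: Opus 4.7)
The statement is self-dual: exchanging pushouts with pullbacks (equivalently, passing to the opposite category and swapping (C3) with (C3')) converts a proof of $(1)\Rightarrow(2)$ into a proof of $(2)\Rightarrow(1)$. So the plan is to establish $(1)\Rightarrow(2)$, and to mirror the argument afterwards.

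Fix a distinguished $n$-exangle
$$\langle X_\bullet,\delta\rangle:\quad A_0\xrightarrow{\alpha_0}A_1\xrightarrow{\alpha_1}\cdots\xrightarrow{\alpha_{n-1}}A_n\xrightarrow{\alpha_n}A_{n+1}\overset{\delta}{\dashrightarrow}$$
realizing $\delta$. Given $s\in\mathcal{F}(A_0,B_0)$ with $s_\ast\delta=0$, the first move is to realize the zero extension $s_\ast\delta=0\in\mathbb{E}(A_{n+1},B_0)$ by a split distinguished $n$-exangle $\langle Y_\bullet,0\rangle$ with $Y_0=B_0$ and $Y_{n+1}=A_{n+1}$; this is obtained by direct-summing the trivial $n$-exangles on $B_0$ and $A_{n+1}$ coming from (C2) and (C2') together with additivity of $\mathfrak{s}$. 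Then apply (C3) to the morphism of extensions $(s,\id_{A_{n+1}}):\delta\to s_\ast\delta$ to produce a good lift $f_\bullet=(s,f_1,\ldots,f_n,\id_{A_{n+1}}):X_\bullet\to Y_\bullet$ whose mapping cone yields a further distinguished $n$-exangle. Commutativity of the last square of this lift records the factorization $\alpha_n=\beta_n\circ f_n$.

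The central step is to read off, from the mapping-cone $n$-exangle together with the explicit direct-sum decomposition of $Y_\bullet$, a concrete pair $(B_{n+1},t)$ with $t\in\mathcal{F}(B_{n+1},A_{n+1})$ that admits a factorization $t=\alpha_n\circ t'$. The natural candidate for $B_{n+1}$ is a suitable direct summand of $Y_n$ (or of the mapping-cone object $X_{n+1}\oplus Y_n$), and $t$ is assembled from $\id_{A_{n+1}}$, from split injections and projections provided by the splitting of $Y_\bullet$, and from the ingredients of $s$ itself; membership $t\in\mathcal{F}$ then follows from (M0) — closure under identities, composition and finite direct sums — upgraded when necessary from $\overline{t}\in\overline{\mathcal{F}}$ to $t\in\mathcal{F}$ by Lemma \ref{LemS}(2). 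Once $t=\alpha_n\circ t'$ is in hand, the exact sequence of Definition \ref{def1}(2) evaluated at $B_{n+1}$ immediately forces $t^\ast\delta=0$, completing $(1)\Rightarrow(2)$.

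The main obstacle will be precisely this last bookkeeping. The abstract existence of the mapping-cone $n$-exangle from (C3) is easy, but producing a \emph{concrete} $(B_{n+1},t)$ lying in $\mathcal{F}$ — rather than only in $\overline{\mathcal{F}}$ — requires matching the split structure of $Y_\bullet$ against the good lift $f_\bullet$ in a careful way, and one may have to invoke (MR3) to promote the intermediate arrows $f_1,\ldots,f_n$ into $\overline{\mathcal{F}}$ so that the direct-sum pieces out of which $t$ is built inherit membership in $\mathcal{F}$ modulo $[\mathcal{N}_\mathcal{F}]$.
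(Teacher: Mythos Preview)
Your overall strategy --- find $t\in\mathcal F$ factoring as $t=\alpha_n\circ t'$, so that $t^\ast\delta=t'^\ast(\alpha_n^\ast\delta)=0$ --- is exactly right, and it is also the paper's strategy. But the tools you reach for do not deliver this factorisation.

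The good lift from (C3) and its mapping cone are a red herring. Axiom (C3) only produces \emph{some} lift $f_\bullet$ of $(s,\id)$, with no control whatsoever over whether $f_1,\ldots,f_n$ lie in $\mathcal F$ or even in $\overline{\mathcal F}$. With your split realization of $s_\ast\delta=0$ (so $Y_n=A_{n+1}$ and $y_n=\id$), commutativity of the last square forces $f_n=\alpha_n$, and the mapping-cone extension is $(y_n)^\ast\delta=\delta$ again --- nothing has been gained. The ``bookkeeping'' you defer is therefore not bookkeeping at all; it is the entire content of the lemma, and the mapping cone contributes nothing to it.

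The correct primary tool is (MR3), not (C3). The paper applies (MR3) directly to the pair $(s,\id_{A_{n+1}})$ between $\langle X_\bullet,\delta\rangle$ and a realization $\langle Y_\bullet,s_\ast\delta\rangle$: this is the hypothesis that guarantees intermediate morphisms $\overline{s_i}\in\overline{\mathcal F}(A_i,Y_i)$ making the squares commute in $\overline\C$. Two further ingredients you omit then finish the job: since $s_\ast\delta=0$ the last map $y_n$ is a split epimorphism, say $y_n m=\id_{A_{n+1}}$; and (MR2) completes a square $\overline m\circ\overline k=\overline{s_n}\circ\overline h$ with $k\in\mathcal F$, whence $\overline k=\overline{y_n s_n h}=\overline{\alpha_n h}$ and Lemma~\ref{LemS}(1) yields $t:=\alpha_n\circ h\in\mathcal F$. (If you keep your split realization with $y_n=\id$, the (MR3) square already forces $\overline{\alpha_n}=\mathbf b_n\in\overline{\mathcal F}$, and Lemma~\ref{LemS}(2) gives $\alpha_n\in\mathcal F$ directly, short-circuiting the (MR2) step --- but this still rests on (MR3), not on (C3).)
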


\begin{proof}
We only prove that $(1)\Rightarrow (2)$, dually one can prove $(2)\Rightarrow (1)$.
Suppose that $s\in\mathcal{F}(A_0,B_0)$ such that $s\sas\delta=0$. Let
$$A_{0}\xrightarrow{x_{0}} A_{1} \xrightarrow{x_{1}}A_{2}\xrightarrow{x_{2}} \cdots\xrightarrow{x_{n-1}} A_{n}\xrightarrow{x_{n}} A_{n+1}\overset{\delta}{\dashrightarrow}$$ and
$$B_{0}\xrightarrow{y_{0}} B_{1} \xrightarrow{y_{1}}B_{2}\xrightarrow{y_{2}} \cdots\xrightarrow{y_{n-1}} B_{n}\xrightarrow{y_{n}} A_{n+1}\overset{s\sas\delta}{\dashrightarrow}$$
be two distinguished $n$-exangles, by \cite[Lemma 3.3 ]{ZW}, we have $\id_{A_{n+1}}$ factors through $y_n$, i.e., there exists a morphism $m\in\C(A_{n+1},B_{n})$, such that $y_{n}m=\id_{A_{n+1}}$. Notice that the equivalent conditions in Lemma \ref {LemS} (2) are satisfied by our assumption. By (MR3), there exists $s_{i}\in\mathcal{F}(A_i,B_i)$ with $i=1,2,\cdots,n$, which makes the following diagram
$$\xymatrix{A_{0}\ar[r]^{\overline {x_0}}\ar@{}[dr] \ar[d]^{\overline{s}} &A_1 \ar[r]^{\overline {x_1}} \ar@{}[dr]\ar[d]^{\overline{s_1}}&\cdot\cdot\cdot \ar[r]^{\overline {x_{n-2}}} \ar@{}[dr]&A_{n-1} \ar[r]^{\overline{x_{n-1}}}\ar@{}[dr]\ar[d]^{\overline{s_{n-1}}} &A_n \ar[r]^{\overline{x_n}} \ar@{}[dr]\ar[d]^{\overline{s_n}}&A_{n+1}\ar@{}[dr]\ar@{=}[d]^{}\ar[dl]_{ \overline{m}}^{\circlearrowleft} &\\
{B_{0}}\ar[r]^{\overline{y_0}} &{B_1}\ar[r]^{\overline{y_1}}&\cdot\cdot\cdot\ar[r]^{\overline{y_{n-2}}} &{B_{n-1}}  \ar[r]^{\overline{y_{n-1}}} &{B_n}\ar[r]^{\overline{y_n}}  &{A_{n+1}} &}
$$
commutative in $\overline{\C}$. By (MR2), we obtain a commutative square
$$\xymatrix{{\clubsuit}\ar@{}[dr]|-{\circlearrowleft}\ar@{=>}[r]^{\overline{k}}\ar[d]_{\overline{h}}&{A_{n+1}}\ar[d]_{\overline{m}}\\
A_n\ar@{=>}[r]^{\overline{s_{n}}}&B_n}$$
in $\overline{\C}$ such that $k\in\mathcal{F}$. Then we have $\overline{k}=\overline{y_n\circ s_n \circ h}=\overline{x_n\circ h}$, which forces $x_n\circ h\in\mathcal{F}$ by Lemma \ref {LemS} (1). Thus $t=x_n\circ h$ satisfies the required properties since $(x_n\circ h){\uas}\delta=h{\uas}({x_n}{\uas}\delta)=0$.
\end{proof}

\begin{construction}\label{cos}
For each $A_0,A_{n+1}\in\C$, define a subset $\mathcal{K}(A_{n+1},A_0)\subseteq\mathbb{E}(A_{n+1},A_0)$ by
\begin{eqnarray*}
\mathcal{K}(A_{n+1},A_0)&=&\{\delta\in \mathbb{E}(A_{n+1},A_0)\mid s\sas\delta=0\ \text{for some}\ s\in\mathcal{F}(A_0,B_0)\}\\
&=&\{\delta\in\mathbb{E}(A_{n+1},A_0)\mid t\uas\delta=0\ \text{for some}\ t\in\mathcal{F}(B_{n+1},A_{n+1})\}.
\end{eqnarray*}

\end{construction}

\begin{remark}\label{rem1}
By Lemma~\ref{LemE}, we know that $\mathcal{K}$ form  a subfunctor of $ \mathbb{E}$. Moreover, $\mathcal{K}\subseteq \mathbb{E}$ is an additive subfunctor. Indeed, for any $\delta\in\mathcal{K}(A_{n+1},A_0)$ and $\delta'\in\mathcal{K}(A'_{n+1},A'_0)$, we have $\delta\oplus\delta'\in\mathcal{K}(A_{n+1}\oplus A'_{n+1},A_0\oplus A'_0)$. Here $\delta\oplus\delta'\in\mathbb{E}(A_{n+1}\oplus A'_{n+1},A_0\oplus A'_0)$ is the element corresponding to $(\delta,0,0,\delta')$ through the isomorphism $\mathbb{E}(A_{n+1}\oplus A'_{n+1},A_0\oplus A'_0)\cong\mathbb{E}(A_{n+1},A_0)\oplus\mathbb{E}(A_{n+1},A'_0)\oplus\mathbb{E}(A'_{n+1},A_0)\oplus\mathbb{E}(A'_{n+1},A'_0)$ induced by the biadditivity of $\mathbb{E}$. This shows the additivity of $\mathcal{K}\subseteq\mathbb{E}$.
\end{remark}

\begin{proposition}\label{pro1}
$\mathbb{E}/\mathcal{K}\colon\C\op\times\C\to{\rm Ab}$ is an additive bifunctor, which satisfies $(\mathbb{E}/\mathcal{K})(N,-)=0$ and $(\mathbb{E}/\mathcal{K})(-,N)=0$ for any $N\in\mathcal{N}_{\mathcal{F}}$.
\end{proposition}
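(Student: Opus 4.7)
My plan is to treat the proposition as two independent assertions. The first --- that $\mathbb{E}/\mathcal{K}\co\C^{\op}\times\C\to\Ab$ is an additive bifunctor --- is essentially a free consequence of Remark~\ref{rem1}, which already records that $\mathcal{K}$ is an additive subfunctor of $\mathbb{E}$. This means each $\mathcal{K}(A_{n+1},A_0)$ is a subgroup of the abelian group $\mathbb{E}(A_{n+1},A_0)$ that is stable under every pushforward $a_{\ast}$ and pullback $c^{\ast}$, and which is compatible with biadditivity in the sense of Remark~\ref{rem1}. Forming the pointwise quotient of abelian groups then yields a well-defined value at every object pair, the induced action on morphisms $(a,c)$ is unambiguous, and both bifunctoriality and biadditivity descend automatically from $\mathbb{E}$. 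I will do no more than record these observations.

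The real content lies in the vanishing statement, which I will deduce from the symmetric description of $\mathcal{K}$ in Construction~\ref{cos} together with the very definition of $\mathcal{N}_{\mathcal{F}}$. Fix $N\in\mathcal{N}_{\mathcal{F}}$, so that both zero morphisms $N\to 0$ and $0\to N$ belong to $\mathcal{F}$. Given any $\delta\in\mathbb{E}(N,X)$, I take $t$ to be the morphism $0\to N$: it lies in $\mathcal{F}(0,N)$, and $t^{\ast}\delta$ sits in $\mathbb{E}(0,X)=0$ by biadditivity of $\mathbb{E}$. Hence $\delta\in\mathcal{K}(N,X)$ by the pullback description in Construction~\ref{cos}, so $(\mathbb{E}/\mathcal{K})(N,X)=0$. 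Dually, given $\delta\in\mathbb{E}(X,N)$ I take $s\co N\to 0$ in $\mathcal{F}(N,0)$ and observe that $s_{\ast}\delta\in\mathbb{E}(X,0)=0$, which forces $(\mathbb{E}/\mathcal{K})(X,N)=0$ via the pushforward description.

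I do not anticipate a real obstacle; the only nuance is that the zero object must be inserted on opposite sides in the two vanishing arguments, which is exactly what the equivalence of the two descriptions of $\mathcal{K}$ supplied by Lemma~\ref{LemE} is there to make painless. Everything else is mechanical verification at the level of the definitions.
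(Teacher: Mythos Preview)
Your proposal is correct and follows exactly the approach the paper intends: the paper's own proof is just the one-line ``It follows from Remark~\ref{rem1} and the definition,'' and you have simply unpacked what that means, invoking Remark~\ref{rem1} for the additive-bifunctor claim and using the two descriptions of $\mathcal{K}$ in Construction~\ref{cos} together with the defining property of $\mathcal{N}_{\mathcal{F}}$ (that $N\to 0$ and $0\to N$ lie in $\mathcal{F}$) for the vanishing.
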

\begin{proof}
It follows from Remark \ref{rem1} and the definition.
\end{proof}
In the rest, for any $\delta\in\mathbb{E}(X,Y)$, let $\overline{\delta}\in(\mathbb{E}/\mathcal{K})(X,Y)$ denote an element represented by $\delta$.
 By Proposition \ref{pro1}, we get a well-defined additive bifunctor $\overline{\mathbb{E}}\colon\overline{\C}\op\times\overline{\C}\to{\rm Ab}$ as follows:

$\bullet$ $\overline{\mathbb{E}}(A_{n+1},A_0)=\mathbb{E}/\mathcal{K}(A_{n+1},A_0)$ for any $A_0,A_{n+1}\in\C$.

$\bullet$ $\overline{a}\sas\overline{\delta}=\overline{a\sas\delta}$ for any $\overline{\delta}\in\overline{\mathbb{E}}(A_{n+1},A_0)$ and $\overline{a}\in\overline{\C}(A_0,A'_0)$.

$\bullet$ $\overline{c}\uas\overline{\delta}=\overline{c\uas\delta}$ for any $\overline{\delta}\in\overline{\mathbb{E}}(A_{n+1},A_0)$ and $\overline{a}\in\overline{\C}(A'_{n+1},A_{n+1})$.

Furthermore, we have a natural transformation $\wp\colon\mathbb{E}\Rightarrow\overline{\mathbb{E}}\circ({p}\op\times p)$ given by
$$\wp_{A_{n+1},A_0}\colon\mathbb{E}(A_{n+1},A_0)\rightarrow\overline{\mathbb{E}}(A_{n+1},A_0); ~~\delta\mapsto\overline{\delta}$$
for each $A_{n+1},A_0\in\C$.

\begin{remark}\label{rem2} The following statements are equivalent for any $\delta\in \mathbb{E}(A_{n+1},A_0)$ by definition.

(1) $\overline{\delta}=0$ holds in $\overline{\mathbb{E}}(A_{n+1},A_0)$.

(2) $\overline{s}\sas\overline{\delta}=0$ holds in $\overline{\mathbb{E}}(A_{n+1},A'_0)$ for some/any $\overline{s}\in\overline{\mathcal{F}}(A_0,A'_0)$.

(3) $\overline{t}\uas\overline{\delta}=0$ holds in $\overline{\mathbb{E}}(A'_{n+1},A_0)$ for some/any $\overline{s}\in\overline{\mathcal{F}}(A'_{n+1},A_{n+1})$.

\end{remark}

 Next, we will construct an additive bifunctor $\widetilde{\mathbb{E}}\colon\widetilde{\C}\op\times\widetilde{\C}\to{\rm Ab}$. To be more specific, we will define $\widetilde{\mathbb{E}}(C,A)$ to be a set of equivalence classes of triplets $(C\ov{\overline t}{\Longleftarrow}X_{n+1}\ov{\overline \delta}{\dashrightarrow}X_{0}\ov{\overline s}{\Longleftarrow}A)$ with respect to some equivalence relation.

By {\rm (MR1)} and {\rm (MR2)}, we have the following key lemma.
\begin{lemma}\label{LemCom}The following statements are true.

{\rm (1)} For any finite family of morphisms $\{\overline{s_i}\in\overline{\mathcal{F}}(A,X_i)\}_{1\le i\le n}$, there exist $X\in\C$, $\overline{s}\in\overline{\mathcal{F}}(A,X)$ and $\{\overline{u_i}\in\overline{\mathcal{F}}(X_i,X)\}_{1\le i\le n}$ such that $\overline{s}=\overline{u_i}\circ\overline{s_i}$ for any $1\le i\le n$.

{\rm (2)} Dual of {\rm (1)}.

\end{lemma}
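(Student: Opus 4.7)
The proof proceeds by induction on $n$. The base case $n=1$ is immediate: set $X = X_{1}$, $\overline{s} = \overline{s_{1}}$, and $\overline{u_{1}} = \overline{\id_{X_{1}}}$, which lies in $\overline{\mathcal{F}}$ since $\mathcal{F}$ contains all identities by (M0).

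For the inductive step, I apply the induction hypothesis to the first $n-1$ morphisms $\{\overline{s_{i}}\}_{1 \le i \le n-1}$ to obtain an object $X' \in \C$, a morphism $\overline{s'} \in \overline{\mathcal{F}}(A, X')$, and morphisms $\overline{u'_{i}} \in \overline{\mathcal{F}}(X_{i}, X')$ satisfying $\overline{u'_{i}} \circ \overline{s_{i}} = \overline{s'}$ for $1 \le i \le n-1$. The problem is then reduced to merging the two morphisms $\overline{s'}\colon A \to X'$ and $\overline{s_{n}}\colon A \to X_{n}$ at a common target. Since $\overline{\mathcal{F}}$ is a multiplicative system in $\overline{\C}$ by (MR2), the Ore condition applied to this pair, both of whose legs lie in $\overline{\mathcal{F}}$, produces an object $X$ together with morphisms $\overline{w} \in \overline{\mathcal{F}}(X', X)$ and $\overline{v} \in \overline{\C}(X_{n}, X)$ such that $\overline{w} \circ \overline{s'} = \overline{v} \circ \overline{s_{n}}$.

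To promote $\overline{v}$ into $\overline{\mathcal{F}}$, observe that $\overline{w} \circ \overline{s'} \in \overline{\mathcal{F}}$ by closure under composition, and therefore $\overline{v} \circ \overline{s_{n}} \in \overline{\mathcal{F}}$; combined with $\overline{s_{n}} \in \overline{\mathcal{F}}$, the 2-out-of-3 axiom (MR1) forces $\overline{v} \in \overline{\mathcal{F}}$. Now set $\overline{s} := \overline{w} \circ \overline{s'}$, $\overline{u_{n}} := \overline{v}$, and $\overline{u_{i}} := \overline{w} \circ \overline{u'_{i}}$ for $i < n$. All of these morphisms lie in $\overline{\mathcal{F}}$ by composition closure, and the identities $\overline{u_{i}} \circ \overline{s_{i}} = \overline{s}$ are immediate in both ranges. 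Statement (2) is obtained by applying (1) in $\C\op$, where $\overline{\mathcal{F}}\op$ inherits the multiplicative system structure together with (MR1). The main anticipated subtlety is isolating the correct "pushout-style" instance of the Ore axiom packaged inside (MR2) and then leveraging (MR1) to certify that the newly produced morphism $\overline{v}$ lies in $\overline{\mathcal{F}}$ rather than merely in $\overline{\C}$; everything else is formal bookkeeping within the inductive framework.
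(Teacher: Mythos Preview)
Your proof is correct and is precisely the natural argument that the paper leaves implicit: the paper does not give a detailed proof of this lemma, merely stating that it follows from (MR1) and (MR2). Your induction together with one application of the Ore condition from (MR2) and one application of 2-out-of-3 from (MR1) is exactly the intended unpacking.
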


The following result is essentially contained in \cite[Propsition 3.19 ]{HYA}. However, it can be
extended to our setting. We give a proof for completeness.

\begin{proposition}\label{Prop1}
Let $n\ge2$ be an integer and $A,C\in\C$ be any pair of objects. Suppose that
$$(\overline{ t_{i}}\backslash \overline{ \delta_{i}}/\overline{ s_{i}})=(C\ov{\overline t_{i}}{\Longleftarrow}Z_{i}\ov{\overline \delta}{\dashrightarrow}X_{i}\ov{\overline s_{i}}{\Longleftarrow}A)  ~~~~(i=1,2,\cdots,n )$$
are triplets with $\overline{s_{i}}\in\overline{\mathcal{F}}(A,X_{i})$, $\overline{t_{i}}\in\overline{\mathcal{F}}(Z_{i},C)$, $\overline{ \delta_{i}}\in\overline{\mathbb{E}}(Z_{i},X_{i})$. The following holds.

{\rm (1)}  We can take a \emph{common denominator} $\overline{s},\overline{t}\in\overline{\mathcal{F}}$. That is to say, we have the following commutative diagram for $1\leq i\leq n$,
\begin{equation}
\xymatrix{&{Z_{i}}\ar@{=>}[ld]_{\overline{t_{i}}}\ar@{}[ld]^{\circlearrowleft}\ar@{-->}[r]^{\overline{\delta_{i}}}\ar@{<=}[d]^{\overline{v_{i}}}&{X_{i}}\ar@{}[rd]_{\circlearrowleft}\ar@{<=}[rd]^{\overline{s_{i}}}\ar@{=>}[d]_{\overline{u_{i}}}&\\
C\ar@{<=}[r]_{\overline{t}}&Z\ar@{-->}[r]_{\overline{\rho_{i}}}&X\ar@{<=}[r]_{\overline{s}} &A}
\end{equation}
where we take $\overline{\rho_{i}}=\overline{v_{i}}\uas\overline{u_{i}}\sas\overline{\delta_{i}}$. When $n=2$, we write $(\overline{ t_{1}}\backslash \overline{ \delta_{1}}/\overline{ s_{1}})\sim(\overline{ t_{2}}\backslash \overline{ \delta_{2}}/\overline{ s_{2}})$ if they satisfy $\overline{\rho_{1}}=\overline{\rho_{2}}$.

{\rm (2)} The relation $(\overline{ t_{1}}\backslash \overline{ \delta_{1}}/\overline{ s_{1}})\sim(\overline{ t_{2}}\backslash \overline{ \delta_{2}}/\overline{ s_{2}})$ is independent of the choice of common denominators. Namely, for any other commutative diagram with $i=1,2$,
\begin{equation}\xymatrix{&{Z_{i}}\ar@{=>}[ld]_{\overline{t_{i}}}\ar@{}[ld]^{\circlearrowleft}\ar@{-->}[r]^{\overline{\delta_{i}}}\ar@{<=}[d]^{\overline{v'_{i}}}&{X_{i}}\ar@{}[rd]_{\circlearrowleft}\ar@{<=}[rd]^{\overline{s_{i}}}\ar@{=>}[d]_{\overline{u'_{i}}}&\\
C\ar@{<=}[r]_{\overline{t'}}&Z'\ar@{-->}[r]_{\overline{\rho'_{i}}}&X'\ar@{<=}[r]_{\overline{s'}} &A}\end{equation}
satisfying $\overline{\rho'_{i}}=\overline{v'_{i}}\uas\overline{u'_{i}}\sas\overline{\delta_{i}}$, we have $\overline{\rho_{1}}=\overline{\rho_{2}}$ if and only if $\overline{\rho'_{1}}=\overline{\rho'_{2}}$.
\end{proposition}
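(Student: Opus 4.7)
The plan for part (1) is a direct application of Lemma \ref{LemCom}. Since the family $\{\overline{s_i}\in\overline{\mathcal F}(A,X_i)\}_{i=1}^{n}$ shares the common source $A$, part (1) of that lemma produces an object $X\in\C$, a morphism $\overline{s}\in\overline{\mathcal F}(A,X)$ and morphisms $\overline{u_i}\in\overline{\mathcal F}(X_i,X)$ with $\overline{u_i}\circ\overline{s_i}=\overline{s}$ for every $i$. Dually, the family $\{\overline{t_i}\in\overline{\mathcal F}(Z_i,C)\}_{i=1}^{n}$ has common target $C$, so part (2) of the lemma yields $Z\in\C$, $\overline{t}\in\overline{\mathcal F}(Z,C)$ and $\overline{v_i}\in\overline{\mathcal F}(Z,Z_i)$ with $\overline{t_i}\circ\overline{v_i}=\overline{t}$. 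Setting $\overline{\rho_i}:=\overline{v_i}\uas\overline{u_i}\sas\overline{\delta_i}\in\overline{\E}(Z,X)$ then makes the required diagram commute by construction.

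For part (2) my strategy is to dominate both common-denominator diagrams by a third one and show that both sides descend to it. Given the two diagrams, with data $(\overline{s},\overline{t},\overline{u_i},\overline{v_i},\overline{\rho_i})$ and $(\overline{s'},\overline{t'},\overline{u'_i},\overline{v'_i},\overline{\rho'_i})$, I would apply Lemma \ref{LemCom}(1) to the pair $\{\overline{s},\overline{s'}\}$ to obtain $\overline{s''}\in\overline{\mathcal F}(A,X'')$ together with $\overline{a}\in\overline{\mathcal F}(X,X'')$ and $\overline{a'}\in\overline{\mathcal F}(X',X'')$ satisfying $\overline{a}\circ\overline{s}=\overline{s''}=\overline{a'}\circ\overline{s'}$, and dually apply Lemma \ref{LemCom}(2) to $\{\overline{t},\overline{t'}\}$ to obtain $\overline{t''}\in\overline{\mathcal F}(Z'',C)$ with $\overline{b}\in\overline{\mathcal F}(Z'',Z)$, $\overline{b'}\in\overline{\mathcal F}(Z'',Z')$ and $\overline{t}\circ\overline{b}=\overline{t''}=\overline{t'}\circ\overline{b'}$. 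I then form $\overline{\rho''_i}:=\overline{b}\uas\overline{a}\sas\overline{\rho_i}$ and its primed counterpart $\overline{b'}\uas\overline{a'}\sas\overline{\rho'_i}$, both lying in $\overline{\E}(Z'',X'')$.

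The hard part will be to show that these two elements actually coincide. By functoriality of $\overline{\E}$, the equality reduces to verifying that $(\overline{a}\circ\overline{u_i})\sas$ agrees with $(\overline{a'}\circ\overline{u'_i})\sas$ on $\overline{\delta_i}$, and dually for the pre-composition side. The identity $\overline{a}\circ\overline{u_i}\circ\overline{s_i}=\overline{a}\circ\overline{s}=\overline{a'}\circ\overline{s'}=\overline{a'}\circ\overline{u'_i}\circ\overline{s_i}$ holds on the nose, but to upgrade it into an equality of $\overline{a}\circ\overline{u_i}$ with $\overline{a'}\circ\overline{u'_i}$ I will invoke the cancellation axiom built into (MR2): there exist $\overline{r_i}\in\overline{\mathcal F}$ with $\overline{r_i}\circ\overline{a}\circ\overline{u_i}=\overline{r_i}\circ\overline{a'}\circ\overline{u'_i}$, and symmetrically $\overline{q_i}\in\overline{\mathcal F}$ for the $\overline{v_i}$ side. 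A further application of Lemma \ref{LemCom} absorbs all the $\overline{r_i}$'s and $\overline{q_i}$'s simultaneously into enlarged common denominators, after which the two formulas for $\overline{\rho''_i}$ become literally equal. Once this is established, the equivalence claim follows from Remark \ref{rem2}: since $\overline{a}\in\overline{\mathcal F}$ and $\overline{b}\in\overline{\mathcal F}$, the operators $\overline{a}\sas$ and $\overline{b}\uas$ are injective on $\overline{\E}$, so $\overline{\rho_1}=\overline{\rho_2}$ is equivalent to $\overline{\rho''_1}=\overline{\rho''_2}$, and symmetrically on the primed side, yielding the desired biconditional.
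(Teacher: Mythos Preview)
Your proposal is correct and follows essentially the same route as the paper. Part (1) is identical. For part (2), the paper also dominates both common-denominator diagrams by a third one $(\overline{s''},\overline{t''})$ obtained from Lemma~\ref{LemCom}, computes $\overline{v}\uas\overline{u}\sas\overline{\rho_i}=\overline{v'}\uas\overline{u'}\sas\overline{\rho'_i}$, and concludes via Remark~\ref{rem2}; the only difference is that where the paper writes ``composing some morphisms $Z'''\to Z''$ and $X''\to X'''$ in $\overline{\mathcal F}$ if necessary, we may assume $\overline{v_iv}=\overline{v'_iv'}$ and $\overline{uu_i}=\overline{u'u'_i}$'', you spell out explicitly how this adjustment is achieved using the cancellation property of the multiplicative system (MR2) together with a further application of Lemma~\ref{LemCom}.
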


\begin{proof}
(1) It follows from Lemma \ref{LemCom}.

(2) Keep the notations in the diagram (3.1) and (3.2), we only prove that if $\overline{\rho_{1}}=\overline{\rho_{2}}$ then $\overline{\rho'_{1}}=\overline{\rho'_{2}}$. Consider triplets
$(\overline{ t}\backslash \overline{ \rho_{i}}/\overline{ s})$ and $(\overline{ t'}\backslash \overline{ \rho'_{i}}/\overline{ s'})$, by Lemma \ref{LemCom}, the exists a common denominator $\overline{s''}, \overline{t''} \in\overline{\mathcal{F}}$ together with the following commutative diagrams
$$\xymatrix{Z_i\ar@{}[rrd]^{\circlearrowleft}\ar@{<=}[r]^{\overline{v_i}}\ar@{=>}[rrd]_{\overline{t_i}}&Z\ar@{}[dr]^{\circlearrowleft}\ar@{=>}[rd]|{\overline{t}}\ar@{<=}[r]^{\overline{v}}&Z''\ar@{=>}[d]|{\overline{t''}}\ar@{=>}[r]^{\overline{v'}}&{Z'}\ar@{}[dl]_{\circlearrowleft}\ar@{=>}[ld]|{\overline{t'}}\ar@{=>}[r]^{\overline{v'_i}}&{Z_i}\ar@{}[lld]_{\circlearrowleft}\ar@{=>}[lld]^{\overline{t_i}}\\
&&C&&}$$
and
$$\xymatrix{&&A\ar@{}[lld]^{\circlearrowleft}\ar@{}[ld]^{\circlearrowleft}\ar@{}[rd]_{\circlearrowleft}\ar@{}[rrd]_{\circlearrowleft}\ar@{=>}[lld]_{\overline{s_i}}\ar@{=>}[ld]|{\overline{s}}\ar@{=>}[d]|{\overline{s''}}\ar@{=>}[rd]|{\overline{s'}}\ar@{=>}[rrd]^{\overline{s_i'}}&&\\
X_i\ar@{=>}[r]_{\overline{u_i}} &X\ar@{=>}[r]_{\overline{u}} &X''\ar@{<=}[r]_{\overline{u'}}&X'\ar@{<=}[r]_{\overline{u'_i}}&X_i.}$$

Composing some morphisms $Z^{\prime\prime\prime}\to Z''$ and $X''\to X^{\prime\prime\prime}$ in $\overline{\mathcal{F}}$ if necessary, we may assume that $\overline{v_iv}=\overline{v'_iv'}$ and $\overline{uu_i}=\overline{u' u'_i}$ hold for $i=1,2$ from the first.
Then we have
$$\overline{v}\uas\overline{u}_{\ast}\overline{\rho_i} = \overline{v}\uas\overline{u}_{\ast}(\overline{v_i}\uas\overline{u_i}_{\ast}\overline{\delta_i})=(\overline{v_iv})\uas(\overline{uu_i})_\ast\overline{\delta_i}
=(\overline{v'_iv'})\uas(\overline{u'u'_i})_\ast\overline{\delta_i}=\overline{v'}\uas\overline{u'}_{\ast}(\overline{v'_i}\uas\overline{u'_i}_{\ast}\overline{\delta_i})=\overline{v'}\uas\overline{u'}_{\ast}\overline{\rho'_i}. $$
So we have $\overline{v'}\uas\overline{u'}_{\ast}\overline{\rho'_1}=\overline{v'}\uas\overline{u'}_{\ast}\overline{\rho'_2}$ by the assumption. Note that $\overline{v'}\uas\overline{u'}_{\ast}(\overline{\rho'_1}-\overline{\rho'_2})=0$, then we have $\overline{\rho'_{1}}=\overline{\rho'_{2}}$ by Remark \ref{rem2}.
\end{proof}
\begin{remark}\label{rem3}
The above relation $\sim$ gives an equivalence relation on the set of triplets
$$\{(\overline{ t}\backslash \overline{ \delta}/\overline{ s})=(C\ov{\overline t}{\Longleftarrow}Z\ov{\overline \delta}{\dashrightarrow}X\ov{\overline s}{\Longleftarrow}A)|X,Z\in\C,\overline{s},\overline{t}\in\overline{\mathcal{F}},\delta\in\overline{\mathbb{E}}(Z,X)  \}~~~~~~(\maltese)$$
for each $A,C\in\C$.
\end{remark}
\begin{proof} One can easily check reflexivity and symmetry. Transitivity can be shown by using Proposition \ref{Prop1}.
\end{proof}

\begin{construction}\label{cos0}
Let $A,C\in\C$ be any pair of objects. Define $\widetilde{\mathbb{E}}(C,A)$ to be the quotient set of $(\maltese)$ by the equivalence relation $\sim$ obtained above.
We denote the equivalence class of $(\overline{ t}\backslash \overline{ \delta}/\overline{ s})=(C\ov{\overline t}{\Longleftarrow}Z\ov{\overline \delta}{\dashrightarrow}X\ov{\overline s}{\Longleftarrow}A) $ by $[\overline{ t}\backslash \overline{ \delta}/\overline{ s}]=[C\ov{\overline t}{\Longleftarrow}Z\ov{\overline \delta}{\dashrightarrow}X\ov{\overline s}{\Longleftarrow}A]\in\widetilde{\mathbb{E}}(C,A) $. By definition we have
$$ \widetilde{\mathbb{E}}(C,A)=\{[\overline{ t}\backslash \overline{ \delta}/\overline{ s}]|X,Z\in\C,\overline{s},\overline{t}\in\overline{\mathcal{F}},\delta\in\overline{\mathbb{E}}(Z,X) \}.          $$\end{construction}

Next, for any $\alpha,\gamma\in\widetilde\C$,  we will define $\alpha_\ast[\overline{ t}\backslash \overline{ \delta}/\overline{ s}]$ and $\gamma\uas[\overline{ t}\backslash \overline{ \delta}/\overline{ s}]$.
First of all, we have the following observation.

\begin{remark}\label{rem4} By (MR2), any morphism $\alpha \in\widetilde\C(A,B)$ can be written as
$$\alpha =\overline{Q}(\overline{s})^{-1}\circ\overline{Q}(\overline{f})=\overline{Q}(\overline{g})\circ\overline{Q}(\overline{t})^{-1}$$
by some pair of morphism $A\ov{\overline f}{\rightarrow}B'\ov{\overline s}{\Longleftarrow}B$ and $A\ov{\overline t}{\Longleftarrow}A'\ov{\overline g}{\rightarrow}B$.
\end{remark}

\begin{construction}\label{def0}
Let $[\overline{ t}\backslash \overline{ \delta}/\overline{ s}]=[C\ov{\overline t}{\Longleftarrow}Z\ov{\overline \delta}{\dashrightarrow}X\ov{\overline s}{\Longleftarrow}A]\in\widetilde{\mathbb{E}}(C,A) $.

$\bigstar$ For any morphism $\alpha\in\widetilde{\C}(A,A')$, we can write it as $\alpha=\overline{Q}(\overline{u})^{-1}\circ\overline{Q}(\overline{a})$ with some $\overline a\in\overline{\C}(A,D)$ and $\overline u\in\overline{\mathcal{F}}(A',D)$ by Remark \ref{rem4}. By (MR2), there exists a commutative square in $\overline\C$ as follows:
$$\xymatrix{X\ar@{}[dr]|-{\circlearrowleft}\ar[r]^{\overline{a'}}\ar@{<=}[d]_{\overline{s}}&{\clubsuit}\ar@{<=}[d]_{\overline{s'}}\\
A\ar[r]^{\overline{a}}&D.}$$
We take $\alpha_\ast[\overline{ t}\backslash \overline{ \delta}/\overline{ s}]=[\overline{ t}\backslash \overline{ a'_\ast\delta}/\overline{ s'\circ u}]=[C\ov{\overline t}{\Longleftarrow}Z\ov{\overline{ a'_\ast\delta}}{\dashrightarrow}\clubsuit\ov{\overline{ s'\circ u}}{\Longleftarrow}A']$.

$\bigstar$ Dually, for any morphism $\gamma\in\widetilde{\C}(C',C)$, we can write it as $\gamma=\overline{Q}(\overline{c})\circ\overline{Q}(\overline{v})^{-1}$ with some $\overline c\in\overline{\C}(E,C)$ and $\overline v\in\overline{\mathcal{F}}(E,C')$ by Remark \ref{rem4}. By (MR2), there exists a commutative square in $\overline\C$ as follows:
$$\xymatrix{\spadesuit\ar@{}[dr]|-{\circlearrowleft}\ar[r]^{\overline{c'}}\ar@{=>}[d]_{\overline{t'}}&{Z}\ar@{=>}[d]_{\overline{t}}\\
E\ar[r]^{\overline{c}}&C.}$$
We take $\gamma\uas[\overline{ t}\backslash \overline{ \delta}/\overline{ s}]=[{\overline{ v\circ t'}}\backslash \overline{ c'^\uas\delta}/\overline{ s}]=[C'\ov{{\overline{ v\circ t'}}}{\Longleftarrow}\spadesuit\ov{\overline{ c'^\uas\delta}}{\dashrightarrow}X\ov{\overline{ s }}{\Longleftarrow}A]$.

\end{construction}

\begin{lemma}\rm \label{Lem5}
The Construction \ref{def0} is well-defined.
\end{lemma}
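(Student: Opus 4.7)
The plan is to establish Lemma \ref{Lem5} in three layers, treating the pushforward $\alpha_\ast$ (the argument for $\gamma^\ast$ being strictly dual): \textbf{(a)} independence of the (MR2)-filling square once the right fraction $\alpha=\overline{Q}(\overline{u})^{-1}\circ\overline{Q}(\overline{a})$ has been fixed; \textbf{(b)} independence of the fraction $(\overline{a},\overline{u})$ representing $\alpha$; and \textbf{(c)} independence of the representative $(\overline{t}\backslash\overline{\delta}/\overline{s})$ of the equivalence class. The essential tools are Lemma \ref{LemCom} (common denominators for finite families in $\overline{\mathcal{F}}$), Proposition \ref{Prop1} (which reduces $\sim$-equivalence to agreement of the $\overline{\rho}$-data on some common denominator, independently of the choice), and Remark \ref{rem2} (converting $\overline{w}_\ast\overline{\rho}=0$ for some $\overline{w}\in\overline{\mathcal{F}}$ into $\overline{\rho}=0$ in $\overline{\mathbb{E}}$).

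For (a), if $(\overline{a'_i},\overline{s'_i})$ are two (MR2)-fillings of the outer square into corners $D_i$, Lemma \ref{LemCom} supplies $\overline{w_i}\in\overline{\mathcal{F}}(D_i,E)$ with $\overline{w_1}\circ\overline{s'_1}=\overline{w_2}\circ\overline{s'_2}$. Since the morphisms $\overline{w_i}\circ\overline{a'_i}\colon X\to E$ agree after precomposition with $\overline{s}\in\overline{\mathcal{F}}$, the equalizing half of (MR2) furnishes $\overline{r}\in\overline{\mathcal{F}}$ equalizing them after post-composition. Pushing $\overline{\delta}$ forward along this common morphism exhibits the two triplets from Construction \ref{def0} as sharing a common denominator with equal $\overline{\rho_i}$, so Proposition \ref{Prop1} yields $\sim$-equivalence.

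For (b), given $\alpha=\overline{Q}(\overline{u_i})^{-1}\circ\overline{Q}(\overline{a_i})$ for $i=1,2$, (MR2) produces a third right fraction $(\overline{a_3},\overline{u_3})$ with morphisms $\overline{k_i}\in\overline{\mathcal{F}}$ satisfying $\overline{k_i}\circ\overline{u_i}=\overline{u_3}$ and $\overline{k_i}\circ\overline{a_i}=\overline{a_3}$. Choosing filling squares for each $(\overline{a_i},\overline{s})$ and routing them into a single target via Lemma \ref{LemCom}, part (a) lets me normalize the choices, after which the pushforwards $[\overline{t}\backslash \overline{a'_i}_\ast\overline{\delta}/\overline{s'_i\circ u_i}]$ descend to equal triplets on a common denominator. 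Part (c) is then immediate: if $(\overline{t_1}\backslash\overline{\delta_1}/\overline{s_1})\sim(\overline{t_2}\backslash\overline{\delta_2}/\overline{s_2})$, the common-denominator diagram of Proposition \ref{Prop1} survives the right-edge modification produced by the fixed (MR2)-filling of $(\overline{a},\overline{s})$, because $\alpha_\ast$ only modifies the $X$-side while the equivalence is detected on the $\overline{\delta}$-component in $\overline{\mathbb{E}}$.

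The main obstacle will be (b): orchestrating three simultaneous (MR2)-fillings --- one for each $(\overline{a_i},\overline{s})$ and one aligning the two presentations of $\alpha$ --- and funneling all intermediate objects into a single target so that Proposition \ref{Prop1} closes the argument is the combinatorial heart of the proof. Once every morphism has been routed into a common object, the verification reduces to the same common-denominator calculus as in (a).
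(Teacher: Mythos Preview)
Your proposal is correct and follows essentially the same approach as the paper. The paper's own proof simply cites \cite[Lemma~3.24]{HYA}, remarking that the argument there carries over verbatim; what you have sketched is precisely that argument --- standard calculus-of-fractions bookkeeping via (MR1), (MR2), Lemma~\ref{LemCom}, Proposition~\ref{Prop1}, and Remark~\ref{rem2} --- which works unchanged here because Construction~\ref{def0} involves only the bifunctor $\overline{\mathbb{E}}$ and the multiplicative system $\overline{\mathcal{F}}$, not the $n$-exangulated structure itself.
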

\begin{proof} This result was proved in \cite[Lemma 3.24]{HYA} for the case that $\C$ is an extriangulated category. But their proof can be applied to the context of an $n$-exangulated category without any
change.
\end{proof}

\begin{remark}\label{rem5}
Keep the notations in Construction \ref{def0}. For any $[\overline{ t}\backslash \overline{ \delta}/\overline{ s}]\in\widetilde{\mathbb{E}}(C,A) $, $\alpha=\overline{Q}(\overline{u})^{-1}\circ\overline{Q}(\overline{a})\in\widetilde{\C}(A,A')$, $\gamma=\overline{Q}(\overline{c})\circ\overline{Q}(\overline{v})^{-1}\in\widetilde{\C}(C',C)$, the element
$$\gamma\uas\alpha_\ast[\overline{ t}\backslash \overline{ \delta}/\overline{ s}]=[\overline{v\circ t'}\backslash \overline{c'^\uas a'_\ast\delta}/\overline{ s'\circ u}]=[C'\ov{\overline{v\circ t'}}{\Longleftarrow}\spadesuit\ov{\overline{c'^\uas a'_\ast\delta}}{\dashrightarrow}\clubsuit\ov{\overline{ s'\circ u}}{\Longleftarrow}A']$$
is defined by the following diagram
$$\xymatrix{&&\spadesuit\ar@{}[dd]|{\circlearrowleft}\ar[rd]^{\overline{c'}}\ar@{=>}[ld]_{\overline{t'}}\ar@{-->}[rrr]^{\overline{c'^\uas a'_\ast\delta}}&&&\clubsuit\ar@{}[dd]|{\circlearrowleft}\ar@{<=}[rd]^{\overline{s'}}&&\\
&E\ar[rd]_{\overline{c}}\ar@{=>}[ld]_{\overline{v}}&&Z\ar@{=>}[ld]^{\overline{t}}\ar@{-->}[r]^{\overline{\delta}}&X\ar[ru]_{\overline{a'}}\ar@{<=}[rd]_{\overline{s}}&&D\ar@{<=}[rd]^{\overline{u}}&\\C'&&C&&&A\ar[ru]_{\overline{a}}&&A'.}$$

\end{remark}

\begin{lemma}\label{kl}\rm Let $A$ and $C$ be two any objects in $\C$. The following statements hold.

(1) For any pair of elements $[\overline{ t_i}\backslash \overline{ \delta_i}/\overline{ s_i}]\in\widetilde{\mathbb{E}}(C,A) ~(i=1,2)$, by Proposition \ref{Prop1}, we define the addition operation as
$$[\overline{ t_1}\backslash \overline{ \delta_1}/\overline{ s_1}]+[\overline{ t_2}\backslash \overline{ \delta_2}/\overline{ s_2}]=[\overline{ t}\backslash \overline{ \rho_1}+\overline{ \rho_2}/\overline{ s}]$$
by taking a common denominator so that $[\overline{ t_i}\backslash \overline{ \delta_i}/\overline{ s_i}]=[\overline{ t}\backslash \overline{ \rho_i}/\overline{ s}]$ hold for $i=1,2$. Then $\widetilde{\mathbb{E}}(C,A)$ can be viewed as an abelian group under this addition with zero element $[\overline{ \id}\backslash 0/\overline{ \id}]$.

(2) For any $\overline{\delta}\in\overline{\mathbb{E}}(C,A)$, if we define a map
$$\overline{\mu}_{C,A}:\overline{\mathbb{E}}(C,A)\longrightarrow\widetilde{\mathbb{E}}(C,A)~~~\overline{\delta}\mapsto[\overline{ \id}\backslash \overline{\delta}/\overline{ \id}],$$
then it is a monomorphism.
\end{lemma}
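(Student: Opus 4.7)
The overall strategy is to reduce every computation about equivalence classes $[\overline{t}\backslash\overline{\delta}/\overline{s}]$ to an assertion about plain elements of $\overline{\mathbb{E}}$ by systematically passing to common denominators, and then to invoke the bilinearity of the additive bifunctor $\overline{\mathbb{E}}$ together with Remark \ref{rem2}.

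For part (1), I would first check that the proposed addition is well-defined. Given two classes $[\overline{t_i}\backslash\overline{\delta_i}/\overline{s_i}]$ ($i=1,2$), Proposition \ref{Prop1} (its ``common denominator'' part, i.e.\ Lemma \ref{LemCom}) produces $\overline{s}\in\overline{\mathcal{F}}(A,X)$ and $\overline{t}\in\overline{\mathcal{F}}(Z,C)$ together with representatives $\overline{\rho_i}\in\overline{\mathbb{E}}(Z,X)$ such that $[\overline{t_i}\backslash\overline{\delta_i}/\overline{s_i}]=[\overline{t}\backslash\overline{\rho_i}/\overline{s}]$. Since $\overline{\rho_1}+\overline{\rho_2}$ is an honest element of the abelian group $\overline{\mathbb{E}}(Z,X)$, the sum $[\overline{t}\backslash(\overline{\rho_1}+\overline{\rho_2})/\overline{s}]$ is an element of $\widetilde{\mathbb{E}}(C,A)$. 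To show independence from all choices, I would take a second common denominator $(\overline{t'},\overline{s'})$ producing representatives $\overline{\rho'_i}$; refining further by Lemma \ref{LemCom} I can compare the two, and $\overline{\rho_1}+\overline{\rho_2}=\overline{\rho'_1}+\overline{\rho'_2}$ at the common refinement follows from biadditivity of the push-forward and pull-back maps attached to the refining morphisms in $\overline{\mathcal{F}}$ (this is exactly the pattern used inside Proposition \ref{Prop1}(2)).

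The abelian group axioms are then routine consequences of the same principle: commutativity and associativity follow by passing to a common denominator for two or three classes (iterating Lemma \ref{LemCom}) and reading them off from the abelian group structure of $\overline{\mathbb{E}}(Z,X)$. For the identity axiom, $[\overline{\id}\backslash 0/\overline{\id}]\in\widetilde{\mathbb{E}}(A,A)$ can be compared with any $[\overline{t}\backslash\overline{\delta}/\overline{s}]$ by using $(\overline{t},\overline{s})$ itself as common denominator, and then $\overline{\delta}+0=\overline{\delta}$ in $\overline{\mathbb{E}}(Z,X)$. The inverse of $[\overline{t}\backslash\overline{\delta}/\overline{s}]$ is $[\overline{t}\backslash -\overline{\delta}/\overline{s}]$ for exactly the same reason.

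For part (2), additivity of $\overline{\mu}_{C,A}$ is immediate, because $[\overline{\id}\backslash\overline{\delta_1}/\overline{\id}]$ and $[\overline{\id}\backslash\overline{\delta_2}/\overline{\id}]$ already share the common denominator $(\overline{\id},\overline{\id})$, so the sum is $[\overline{\id}\backslash(\overline{\delta_1}+\overline{\delta_2})/\overline{\id}]=\overline{\mu}_{C,A}(\overline{\delta_1}+\overline{\delta_2})$. For injectivity, suppose $\overline{\mu}_{C,A}(\overline{\delta})=[\overline{\id}\backslash 0/\overline{\id}]$. By Proposition \ref{Prop1}, there exist $\overline{u}\in\overline{\mathcal{F}}(A,X)$ and $\overline{v}\in\overline{\mathcal{F}}(Z,C)$ such that $\overline{v}\uas\overline{u}_\ast\overline{\delta}=0$ in $\overline{\mathbb{E}}(Z,X)$. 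By Remark \ref{rem2} this forces $\overline{\delta}=0$ in $\overline{\mathbb{E}}(C,A)$, so $\overline{\mu}_{C,A}$ has trivial kernel. The main obstacle throughout is bookkeeping with common denominators; the only substantive step is well-definedness of the addition, and that has essentially already been carried out in Proposition \ref{Prop1}(2).
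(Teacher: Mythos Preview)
Your proposal is correct and follows essentially the same approach as the paper: reduce everything to common denominators via Proposition \ref{Prop1} and Lemma \ref{LemCom}, then read off the group axioms from the abelian group structure of $\overline{\mathbb{E}}(Z,X)$, with injectivity of $\overline{\mu}_{C,A}$ coming from Remark \ref{rem2}. The paper's proof is terser (it simply cites Proposition \ref{Prop1} for well-definedness and ``the definition and Remark \ref{rem2}'' for part (2)), but the logical content is identical; the only slip is a typo where you write $[\overline{\id}\backslash 0/\overline{\id}]\in\widetilde{\mathbb{E}}(A,A)$ rather than $\widetilde{\mathbb{E}}(C,A)$.
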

\begin{proof}
(1) The addition operation is well-defined by Proposition \ref{Prop1}. Next, we will check that $\widetilde{\mathbb{E}}(C,A)$ is an abelian group.

$\bullet$ (Commutativity) It is obvious.

$\bullet$ (Associativity) For any $[\overline{ t_i}\backslash \overline{ \delta_i}/\overline{ s_i}]\in\widetilde{\mathbb{E}}(C,A) ~(i=1,2,3)$, we take common denominators so that $[\overline{ t_i}\backslash \overline{ \delta_i}/\overline{ s_i}]=[\overline{ t}\backslash \overline{ \rho_i}/\overline{ s}]$ hold for $i=1,2,3$. So we have
$$([\overline{ t_1}\backslash \overline{ \delta_1}/\overline{ s_1}]+[\overline{ t_2}\backslash \overline{ \delta_2}/\overline{ s_2}])+[\overline{ t_3}\backslash \overline{ \delta_3}/\overline{ s_3}]=[\overline{ t}\backslash \overline{ \rho_1}+\overline{ \rho_2}+\overline{ \rho_3}/\overline{ s}]$$
$$=[\overline{ t_1}\backslash \overline{ \delta_1}/\overline{ s_1}]+([\overline{ t_2}\backslash \overline{ \delta_2}/\overline{ s_2}]+[\overline{ t_3}\backslash \overline{ \delta_3}/\overline{ s_3}])$$

$\bullet$ (Zero element) For any $[\overline{ t}\backslash \overline{ \delta}/\overline{ s}]\in\widetilde{\mathbb{E}}(C,A)$, we have
$$[\overline{ t}\backslash \overline{ \delta}/\overline{ s}]+[\overline{ \id}\backslash 0/\overline{ \id}]=[\overline{ t}\backslash \overline{ \delta}/\overline{ s}]+[\overline{ t}\backslash 0/\overline{ s}]=[\overline{ t}\backslash \overline{ \delta}/\overline{ s}].$$

$\bullet$ (Inverse element) For any $[\overline{ t}\backslash \overline{ \delta}/\overline{ s}]\in\widetilde{\mathbb{E}}(C,A)$, it is clear that $[\overline{ t}\backslash \overline{ -\delta}/\overline{ s}]$ is the inverse element of $[\overline{ t}\backslash \overline{ \delta}/\overline{ s}]$.

(2) It follows from the definition and Remark \ref{rem2}.
\end{proof}

\begin{proposition}
\rm The Lemma \ref{kl} give an additive bifunctor $\widetilde{\E}\colon\widetilde{\C}^{\rm op}\times \widetilde{\C}\to{\rm Ab}$ and a natural transformation $\overline\mu :\overline{\mathbb{E}}\Longrightarrow\widetilde{\mathbb{E}}\circ (\overline{Q}^{\rm op}\times \overline{Q})$.
\end{proposition}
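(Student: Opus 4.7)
The plan is to verify, using Construction~\ref{def0}, Lemma~\ref{Lem5} and Proposition~\ref{Prop1} as the main tools, that the assignments $\alpha \mapsto \alpha_{\ast}$ and $\gamma \mapsto \gamma^{\ast}$ from Construction~\ref{def0} make $\widetilde{\mathbb{E}}$ into a biadditive bifunctor and that the maps $\overline{\mu}_{C,A}$ from Lemma~\ref{kl}(2) assemble into a natural transformation. Since Lemma~\ref{kl}(1) already equips each $\widetilde{\mathbb{E}}(C,A)$ with an abelian group structure and Lemma~\ref{Lem5} gives well-definedness of the two actions on classes, what remains are the axioms of a bifunctor: preservation of identities, compatibility with composition in each slot, the Godement-type commutation $\gamma^{\ast}\alpha_{\ast}=\alpha_{\ast}\gamma^{\ast}$, and additivity of both $\alpha_{\ast}$ and $\gamma^{\ast}$.

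For functoriality, I would first verify the identity law: when $\alpha=\id_A$ in $\widetilde{\C}$, one may choose the trivial representative $(\overline{a},\overline{u})=(\overline{\id_A},\overline{\id_A})$ and the tautological commutative square in the pushout step, so Construction~\ref{def0} gives $(\id_A)_{\ast}[\overline{t}\backslash\overline{\delta}/\overline{s}]=[\overline{t}\backslash\overline{\delta}/\overline{s}]$; the dual argument handles $(\id_C)^{\ast}$. For composition, given $A\xrightarrow{\alpha}A'\xrightarrow{\alpha'}A''$, write each as a roof via Remark~\ref{rem4} and use (MR2) twice to construct a $2\times 2$ commutative square of roofs over $X$; applying Construction~\ref{def0} twice and once (to the composite roof) yields the same triplet up to the equivalence of Proposition~\ref{Prop1}. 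Covariant composition for $\gamma'\circ\gamma$ in $\widetilde{\C}^{\rm op}$ is dual. Additivity $(\alpha_{\ast}([\overline{t_1}\backslash\overline{\delta_1}/\overline{s_1}]+[\overline{t_2}\backslash\overline{\delta_2}/\overline{s_2}])=\alpha_{\ast}[\overline{t_1}\backslash\overline{\delta_1}/\overline{s_1}]+\alpha_{\ast}[\overline{t_2}\backslash\overline{\delta_2}/\overline{s_2}])$ follows by first replacing the two summands with representatives over a common denominator (Proposition~\ref{Prop1}) and then applying the additivity of $\overline{a'}_{\ast}$ on $\overline{\mathbb{E}}(Z,-)$; a symmetric argument gives additivity of $\gamma^{\ast}$.

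For the bifunctoriality identity $\gamma^{\ast}\alpha_{\ast}=\alpha_{\ast}\gamma^{\ast}$, I would use Remark~\ref{rem5} directly: the composite $\gamma^{\ast}\alpha_{\ast}[\overline{t}\backslash\overline{\delta}/\overline{s}]$ is computed by assembling a single diagram involving a pullback square on the left of $Z$ and a pushout square on the right of $X$, with middle component $\overline{c'^{\ast}\overline{a'}_{\ast}\overline{\delta}}$. Because these two squares are independent of one another and take place on opposite sides of the triplet, exchanging the order of the constructions produces $\overline{a''}_{\ast}\overline{c''}^{\ast}\overline{\delta}$ where $(\overline{c''},\overline{a''})$ are obtained from the same squares; the biadditivity of $\overline{\mathbb{E}}$ in $\overline{\C}$ then identifies the two outputs, so both classes represent the same element of $\widetilde{\mathbb{E}}(C',A')$.

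Finally, for naturality of $\overline{\mu}$, given $\overline{\delta}\in\overline{\mathbb{E}}(C,A)$ and $\overline{a}\in\overline{\C}(A,A')$, one computes $\overline{Q}(\overline{a})_{\ast}\overline{\mu}_{C,A}(\overline{\delta})=\overline{Q}(\overline{a})_{\ast}[\overline{\id}\backslash\overline{\delta}/\overline{\id}]$ via Construction~\ref{def0} using the trivial representative $\overline{Q}(\overline{a})=\overline{Q}(\overline{\id})^{-1}\circ\overline{Q}(\overline{a})$; the resulting triplet is $[\overline{\id}\backslash\overline{a}_{\ast}\overline{\delta}/\overline{\id}]=\overline{\mu}_{C,A'}(\overline{a}_{\ast}\overline{\delta})$, and the contravariant slot is dual. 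The main obstacle I expect is the bifunctoriality step: juggling the pushout and pullback squares inside $\overline{\C}$ while keeping track of the representatives in $\overline{\mathcal{F}}$ demands some care, but once one sees that the two squares in Remark~\ref{rem5} may be chosen independently, the rest reduces to the standard manipulation of roofs in a category with a multiplicative system.
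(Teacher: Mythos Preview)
Your proposal is correct and follows exactly the route the paper defers to: the paper's own proof simply cites \cite[Lemma 3.27]{HYA} and remarks that the extriangulated argument carries over verbatim, and what you have outlined---identity, composition via (MR2), additivity via common denominators, the commutation $\gamma^{\ast}\alpha_{\ast}=\alpha_{\ast}\gamma^{\ast}$ read off from Remark~\ref{rem5}, and naturality of $\overline{\mu}$ from the trivial roof---is precisely that argument. In short, you have reconstructed the details the paper chose to suppress by reference.
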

\begin{proof} This result was proved in \cite[Lemma 3.27]{HYA} for the case that $\C$ is an extriangulated category. But their proof can be applied to the context of an $n$-exangulated category without any
change.
\end{proof}

\subsection{Construction of $\widetilde{\s}$}
\begin{definition}\label{Def0}
Let $[\overline{ t}\backslash \overline{ \delta}/\overline{ s}]=[C\ov{\overline t}{\Longleftarrow}X_{n+1}\ov{\overline \delta}{\dashrightarrow}X_0\ov{\overline s}{\Longleftarrow}A]\in\widetilde{\mathbb{E}}(C,A) $ be any $\widetilde{\mathbb{E}}$-extension. Take $$\s(\delta)=[X_0\xrightarrow{x_0}X_1\xrightarrow{x_1}\cdots\xrightarrow{x_{n-2}}X_{n-1}
\xrightarrow{x_{n-1}}X_n\xrightarrow{x_n}X_{n+1}],$$
and put
$$\widetilde{\s}([\overline{ t}\backslash \overline{ \delta}/\overline{ s}])=[A\xrightarrow{Q(x_0\circ s)}X_1\xrightarrow{Q(x_1)}\cdots\xrightarrow{Q(x_{n-2})}X_{n-1}
\xrightarrow{Q(x_{n-1})}X_n\xrightarrow{Q (t\circ x_n)}C]$$
in $\widetilde{\C}$.
\end{definition}
The following Lemma show that Definition \ref{Def0} is well-defined.
\begin{lemma}\rm \label{Lem6}
Let $A,C\in\C$ be any pair of objects. Suppose that $[\overline{ t}\backslash \overline{ \delta}/\overline{ s}]=(C\ov{\overline t}{\Longleftarrow}X_{n+1}\ov{\overline \delta}{\dashrightarrow}X_0\ov{\overline s}{\Longleftarrow}A)$ and $[\overline{ t'}\backslash \overline{ \delta'}/\overline{ s'}]=(C\ov{\overline t'}{\Longleftarrow}X'_{n+1}\ov{\overline \delta'}{\dashrightarrow}X'_0\ov{\overline s'}{\Longleftarrow}A)$ be two triplets satisfying $t,t',s,s'\in\overline{\mathcal{F}}, \delta\in\overline{\mathbb{E}}(X_{n+1},X_{0}), \delta'\in\overline{\mathbb{E}}(X'_{n+1},X'_{0})$. Let
$$\s(\delta)=[X_0\xrightarrow{x_0}X_1\xrightarrow{x_1}\cdots\xrightarrow{x_{n-2}}X_{n-1}
\xrightarrow{x_{n-1}}X_n\xrightarrow{x_n}X_{n+1}]$$ and
$$\s(\delta')=[X'_0\xrightarrow{x'_0}X'_1\xrightarrow{x'_1}\cdots\xrightarrow{x'_{n-2}}X'_{n-1}
\xrightarrow{x'_{n-1}}X'_n\xrightarrow{x'_n}X'_{n+1}].$$

If $[\overline{ t}\backslash \overline{ \delta}/\overline{ s}]=[\overline{ t'}\backslash \overline{ \delta'}/\overline{ s'}]$ in $\widetilde{\mathbb{E}}(C,A)$, then
$$[A\xrightarrow{Q(x_0\circ s)}X_1\xrightarrow{Q(x_1)}\cdots\xrightarrow{Q(x_{n-2})}X_{n-1}
\xrightarrow{Q(x_{n-1})}X_n\xrightarrow{Q (t\circ x_n)}C]=$$
$$[A\xrightarrow{Q(x'_0\circ s')}X'_1\xrightarrow{Q(x'_1)}\cdots\xrightarrow{Q(x'_{n-2})}X'_{n-1}
\xrightarrow{Q(x'_{n-1})}X'_n\xrightarrow{Q (t'\circ x'_n)}C]$$ holds as sequences in $\widetilde{\C}$.
\end{lemma}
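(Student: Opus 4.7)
The plan is to introduce a common distinguished $n$-exangle that realizes a shared representative of the two triplets, then use (MR3) twice to produce middle-term comparison morphisms from $\s(\delta)$ and from $\s(\delta')$ into this common realization, and finally observe that these comparisons become isomorphisms after the localization $\overline{Q}$ is applied.

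First, I would apply Proposition~\ref{Prop1} to the assumption $[\overline{t}\backslash\overline{\delta}/\overline{s}]=[\overline{t'}\backslash\overline{\delta'}/\overline{s'}]$ to obtain a common denominator: morphisms $\overline{s''}\in\overline{\mathcal{F}}(A,X)$, $\overline{t''}\in\overline{\mathcal{F}}(Z,C)$, together with $\overline{u}\in\overline{\mathcal{F}}(X_0,X)$, $\overline{u'}\in\overline{\mathcal{F}}(X'_0,X)$, $\overline{v}\in\overline{\mathcal{F}}(Z,X_{n+1})$, $\overline{v'}\in\overline{\mathcal{F}}(Z,X'_{n+1})$ making the pushout/pullback squares of Proposition~\ref{Prop1} commute and satisfying $\overline{v}^\ast\overline{u}_\ast\overline{\delta}=\overline{v'}^\ast\overline{u'}_\ast\overline{\delta'}$ in $\overline{\mathbb{E}}(Z,X)$. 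Lifting this common class to $\rho\in\mathbb{E}(Z,X)$, fix a distinguished $n$-exangle $\langle Z_{\bullet},\rho\rangle$ in $\C$ realizing it, with $Z_0=X$ and $Z_{n+1}=Z$.

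Next, the pairs $(\overline{u},\overline{v})$ and $(\overline{u'},\overline{v'})$ constitute morphisms of extensions $\overline{\delta}\to\overline{\rho}$ and $\overline{\delta'}\to\overline{\rho}$ in $\overline{\C}$. Applying (MR3) to each yields middle components $\mathbf{b_i}\in\overline{\mathcal{F}}(X_i,Z_i)$ and $\mathbf{b'_i}\in\overline{\mathcal{F}}(X'_i,Z_i)$ for $1\le i\le n$ fitting into commutative squares in $\overline{\C}$. Because $\overline{Q}$ inverts every morphism of $\overline{\mathcal{F}}$, each $\overline{Q}(\mathbf{b_i})$ and $\overline{Q}(\mathbf{b'_i})$ is an isomorphism in $\widetilde{\C}$, and similarly for $\overline{Q}(\overline{u})$, $\overline{Q}(\overline{u'})$, $\overline{Q}(\overline{v})$, $\overline{Q}(\overline{v'})$. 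Assembling these, the $\widetilde{\C}$-sequences associated with $\s(\delta)$ and $\s(\delta')$ are both componentwise isomorphic to the $\widetilde{\C}$-sequence associated with $Z_{\bullet}$, through chains of isomorphisms that restrict to the identity on the outer objects $A$ and $C$; hence they determine the same homotopy equivalence class.

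The main obstacle will be the bookkeeping at the outer terms: one must check that, after inversion, the identifications $\overline{Q}(x_0\circ s)=\overline{Q}(\mathbf{b_1})^{-1}\circ\overline{Q}(z_0\circ s'')$ and $\overline{Q}(t\circ x_n)=\overline{Q}(t''\circ z_n)\circ\overline{Q}(\mathbf{b_n})$, together with their primed analogues, hold on the nose in $\widetilde{\C}$. This amounts to unpacking the commutativity of the squares from Proposition~\ref{Prop1} at the degree-$0$ and degree-$(n{+}1)$ spots, combined with the identities $\overline{u}\circ\overline{s}=\overline{s''}=\overline{u'}\circ\overline{s'}$ and $\overline{t}\circ\overline{v}=\overline{t''}=\overline{t'}\circ\overline{v'}$. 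Once this is done, the $n$-exangulated case proceeds verbatim as in the extriangulated case of \cite[Lemma~3.29]{HYA}.
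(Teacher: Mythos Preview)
Your overall strategy—pass to a common realization and apply (MR3)—is the right one, but the step where you say ``the pairs $(\overline{u},\overline{v})$ and $(\overline{u'},\overline{v'})$ constitute morphisms of extensions $\overline{\delta}\to\overline{\rho}$'' does not go through. From the common denominator of Proposition~\ref{Prop1} one obtains $\overline{u}\colon X_0\to X$ and $\overline{v}\colon Z\to X_{n+1}$; a morphism of extensions $\delta\to\rho$ would require a map $X_{n+1}\to Z$, while you only have one in the opposite direction. Neither $(\overline{u},\overline{v})$ nor its reverse supports a morphism of extensions between $\delta$ and $\rho$, so (MR3) cannot be invoked as stated. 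A second, related issue is that (MR3) demands the identity $a_\ast\delta=c^\ast\delta'$ in $\mathbb{E}$, whereas your equality $\overline{v}^\ast\overline{u}_\ast\overline{\delta}=\overline{\rho}$ is only available in $\overline{\mathbb{E}}$.

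The paper handles both points by first reducing (via the common-denominator description of $\sim$) to the asymmetric situation in which the second triplet already plays the role of the target: one has $\overline{u}\in\overline{\mathcal{F}}(X_0,X'_0)$, $\overline{v}\in\overline{\mathcal{F}}(X'_{n+1},X_{n+1})$ with $\overline{u}_\ast\overline{v}^\ast\overline{\delta}=\overline{\delta'}$, and after a further push along some $u'\in\mathcal{F}$ one may assume $u_\ast v^\ast\delta=\delta'$ in $\mathbb{E}$. The key device is then to insert the intermediate extension $u_\ast\delta\in\mathbb{E}(X_{n+1},X'_0)$: the honest morphisms of extensions $(u,\id_{X_{n+1}})\colon\delta\to u_\ast\delta$ and $(\id_{X'_0},v)\colon\delta'\to u_\ast\delta$ now point the same way with ends in $\overline{\mathcal{F}}$, so (MR3) applies to each and produces the zigzag $X_\bullet\to\s(u_\ast\delta)\leftarrow X'_\bullet$ of $\overline{\mathcal{F}}$-maps that becomes an isomorphism after $\overline{Q}$. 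Your argument is repairable along exactly these lines, but it genuinely needs this intermediate realization (or an equivalent zigzag) rather than a single application of (MR3).
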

\begin{proof} By the definition of the equivalence relation, we have the following commutative diagram
$$\xymatrix{&{X_{n+1}}\ar@{=>}[ld]_{\overline{t}}\ar@{}[ld]^{\circlearrowleft}\ar@{-->}[r]^{\overline{\delta}}\ar@{<=}[d]^{\overline{v}}&{X_{0}}\ar@{}[rd]_{\circlearrowleft}\ar@{<=}[rd]^{\overline{s}}\ar@{=>}[d]_{\overline{u}}&\\
C\ar@{<=}[r]_{\overline{t'}}&X'_{n+1}\ar@{-->}[r]_{\overline{\delta'}}&X'_0\ar@{<=}[r]_{\overline{s'}} &A.}$$
That is to say , there exist $\overline{u}\in\overline{\mathcal{F}}(X_0,X'_0)$ and $\overline{v}\in\overline{\mathcal{F}}(X'_{n+1},X'_{n+1})$ such that $\overline{s'}=\overline{u}\circ\overline{s},~\overline{t'}=\overline{t}\circ\overline{v}$, and
$\overline {u\uas v_\ast \delta}=\overline {\delta'}$. By the definition of $\overline{\mathbb{E}}$, there exists $u'\in\overline{\mathcal{F}}(X'_0,X''_0)$ such that $u'^\uas(u\uas v_\ast \delta)= u'^\uas{\delta'}$.

So replacing $u,s', \delta'$ by $u'u,u's', u'_\ast\delta'$ respectively, we may assume that
${u\uas v_\ast \delta}={\delta'}$ holds from the beginning. Under this circumstance, for
$$\s(u_\ast\delta)=[X'_0\xrightarrow{y_0}Y_1\xrightarrow{y_1}\cdots\xrightarrow{y_{n-2}}Y_{n-1}
\xrightarrow{y_{n-1}}Y_n\xrightarrow{y_n}X_{n+1}],$$
there exist $\mathbf{w_1,\cdots,w_n,w'_1,\cdots,w'_n} \in \overline{\mathcal{F}}$ such that the following diagram is commutative in $\overline \C$ by (MR3):

$$\xymatrix{
X_0\ar[r]^{\overline {x_0}}\ar@{}[dr] \ar@{=>}[d]^{\overline u} &X_1 \ar[r]^{\overline {x_1}} \ar@{}[dr]\ar@{=>}[d]^{ {\mathbf{w_1}}} &X_2 \ar[r]^{\overline {x_2}} \ar@{}[dr]\ar@{=>}[d]^{ {\mathbf{w_2}}}&\cdot\cdot\cdot \ar[r]^{\overline {x_{n-2}}}\ar@{}[dr] &X_{n-1} \ar[r]^{\overline {x_{n-1}}} \ar@{}[dr]\ar@{=>}[d]^{ {\mathbf{w_{n-1}}}}&{X_{n}}\ar[r]^{\overline {x_{n}}}\ar@{}[dr]\ar@{=>}[d]^{ {\mathbf{w_n}}}&{X_{n+1}} \ar@{}[dr]\ar@{=}[d]^{} &\\X'_0\ar[r]^{\overline {y_{0}}}\ar@{}[dr] \ar@{=}[d]^{} &Y_1 \ar[r]^{\overline {y_{1}}} \ar@{}[dr]\ar@{<=}[d]^{ {\mathbf{w'_{1}}}}&Y_2 \ar[r]^{\overline {y_{2}}} \ar@{}[dr]\ar@{<=}[d]^{ {\mathbf{w'_{2}}}}&\cdot\cdot\cdot \ar[r]^{\overline {y_{n-2}}}\ar@{}[dr] &Y_{n-1} \ar[r]^{\overline {y_{n-1}}} \ar@{}[dr]\ar@{<=}[d]^{ {\mathbf{w'_{n-1}}}}&Y_n\ar[r]^{\overline {y_{n}}} \ar@{}[dr]\ar@{<=}[d]^{ {\mathbf{w'_{n}}}}&{X_{n+1}} \ar@{}[dr]\ar@{<=}[d]^{\overline v} &\\
X'_0 \ar[r]_{\overline {x'_{0}}}&X'_1 \ar[r]_{\overline {x'_{1}}} & X'_2 \ar[r]_{\overline {x'_{2}}}  & \cdots \ar[r]_{{\overline {x'_{n-2}}}} & {X'_{n-1}}\ar[r]_{\overline {x'_{n-1}}}  &{X'_{n}}\ar[r]_{\overline {x'_{n}}}  &{X'_{n+1}}  &}
$$
Take $\alpha_i=\overline {Q}\mathbf{(w'_{i})^{-1}}\circ \overline {Q}\mathbf{(w_{i})}\in\textrm{Iso}(\widetilde{\C})$, where $i=1,2,\cdots,n$. We have the following commutative diagram
$$\xymatrix@C=1.2cm{
A\ar[r]^{Q(x_0\circ s)}\ar@{=}[d]^{}& X_1 \ar[r]^{Q(x_1)}\ar[d]_{\alpha_1}^{\cong} & X_2 \ar[r]^{Q(x_2)}\ar[d]_{\alpha_2}^{\cong}& \cdots \ar[r]^{Q(x_{n-2})}&X_{n-1}\ar[d]_{\alpha_{n-1}}^{\cong}\ar[r]^{Q(x_{n-1})}&X_{n}\ar[r]^{Q(t\circ x_{n})}\ar[d]_{\alpha_{n}}^{\cong}&C\ar@{}[dr]\ar@{=}[d]^{} &\\
A\ar[r]_{Q(x'_0\circ s')}&X'_1 \ar[r]_{Q(x'_1)} & X'_2 \ar[r]_{Q(x'_2)}  & \cdots \ar[r]_{Q(x'_{n-2})} &X'_{n-1}\ar[r]_{Q(x'_{n-1})}& X'_{n} \ar[r]_{Q(t'\circ x'_{n})}&C &}$$
in $\widetilde{\C}$, this is exactly what we want.
\end{proof}
\begin{proposition}\rm \label{pro6}
Let $[\overline{ t}\backslash \overline{ \delta}/\overline{ s}]=[C\ov{\overline t}{\Longleftarrow}X_{n+1}\ov{\overline \delta}{\dashrightarrow}X_0\ov{\overline s}{\Longleftarrow}A]\in\widetilde{\mathbb{E}}(C,A) $ be any $\widetilde{\mathbb{E}}$-extension.
Take $$\widetilde{\s}([\overline{ t}\backslash \overline{ \delta}/\overline{ s}])=[A\xrightarrow{Q(x_0\circ s)}X_1\xrightarrow{Q(x_1)}\cdots\xrightarrow{Q(x_{n-2})}X_{n-1}
\xrightarrow{Q(x_{n-1})}X_n\xrightarrow{Q (t\circ x_n)}C],$$
where $$\s(\delta)=[X_0\xrightarrow{x_0}X_1\xrightarrow{x_1}\cdots\xrightarrow{x_{n-2}}X_{n-1}
\xrightarrow{x_{n-1}}X_n\xrightarrow{x_n}X_{n+1}].$$ If any distinguished $n$-exangle in $\C$ induces a weak kernel-cokernel sequence in $\widetilde{\mathcal{C}}$, then the following statements hold.

(1) The sequence is exact
$$
\widetilde{\C}(-,A)\xrightarrow{\widetilde{\C}(-,\  Q(x_0\circ s))}\widetilde{\C}(-,X_1)\xrightarrow{\widetilde{\C}(-,\ Q(x_1))}\cdots\xrightarrow{\widetilde{\C}(-,\ Q(t\circ x_n))}\widetilde{\C}(-,C)\xrightarrow{{[\overline{ t}\backslash \overline{ \delta}/\overline{ s}]\ssh}}\widetilde{\mathbb{E}}(-,A).
$$

(2) The sequence is exact
$$
\widetilde{\C}(C,-)\xrightarrow{\widetilde{\C}(Q(t\circ x_n),\ -)}\widetilde{\C}(X_{n},-)\xrightarrow{\widetilde{\C}(Q({x_{n-1}}),\ -)}\cdots\xrightarrow{\widetilde{\C}(Q(x_0\circ s),\ -)}\widetilde{\C}(A,-)\xrightarrow{{[\overline{ t}\backslash \overline{ \delta}/\overline{ s}]\ush}}\widetilde{\mathbb{E}}(C,-).
$$
\end{proposition}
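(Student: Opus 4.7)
My plan is to reduce both (1) and (2) to a single key argument together with its dual. By hypothesis, the distinguished $n$-exangle $\Xd$ realizing $\delta$ induces a weak kernel-cokernel sequence
$$X_0\xrightarrow{Q(x_0)}X_1\xrightarrow{Q(x_1)}\cdots\xrightarrow{Q(x_{n-1})}X_n\xrightarrow{Q(x_n)}X_{n+1}$$
in $\widetilde{\C}$. Since $Q(s)$ and $Q(t)$ are isomorphisms in $\widetilde{\C}$, pre- and post-composing with them preserves exactness at every interior position, so both Hom-sequences in (1) and (2) are already exact at $X_1,\ldots,X_n$. The only new exactness to establish is at $\widetilde{\C}(-,C)$ in (1) and at $\widetilde{\C}(A,-)$ in (2); I focus on (1), since (2) follows dually.

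To see that the composition $[\overline{t}\backslash\overline{\delta}/\overline{s}]\ssh\circ Q(tx_n)_{\ast}$ vanishes, I would compute $Q(tx_n)\uas[\overline{t}\backslash\overline{\delta}/\overline{s}]$ via Construction~\ref{def0}. Writing $Q(tx_n)=\overline{Q}(\overline{tx_n})\circ\overline{Q}(\overline{\id})^{-1}$, the required pullback square is trivially realized by $\overline{c'}=\overline{x_n}$, $\overline{t'}=\overline{\id_{X_n}}$, giving
$$Q(tx_n)\uas[\overline{t}\backslash\overline{\delta}/\overline{s}]=[\overline{\id_{X_n}}\backslash\overline{x_n\uas\delta}/\overline{s}].$$
But $x_n\uas\delta=0$ in $\E$ by the defining condition on an $\E$-attached complex (see Definition~\ref{def1}), so this class vanishes in $\widetilde{\E}(X_n,A)$.

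For the kernel-in-image direction, take $\alpha\in\widetilde{\C}(Y,C)$ with $\alpha\uas[\overline{t}\backslash\overline{\delta}/\overline{s}]=0$. Writing $\alpha=\overline{Q}(\overline{c})\circ\overline{Q}(\overline{v})^{-1}$ with $\overline{v}\in\overline{\mathcal{F}}(E,Y)$ and forming a pullback via (MR2), I obtain $\overline{c'}\colon\spadesuit\to X_{n+1}$ and $\overline{t'}\in\overline{\mathcal{F}}(\spadesuit,E)$ with $\overline{tc'}=\overline{ct'}$, whence $\alpha\uas[\overline{t}\backslash\overline{\delta}/\overline{s}]=[\overline{vt'}\backslash\overline{c'\uas\delta}/\overline{s}]$. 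The vanishing of this class, once unpacked through the common-denominator description in Proposition~\ref{Prop1} together with Remark~\ref{rem2}, furnishes $\overline{v_1}\in\overline{\mathcal{F}}(Z',\spadesuit)$ (membership in $\overline{\mathcal{F}}$ secured by (MR1) applied to the common denominator) and $q_1\in\mathcal{F}(X_0,X'')$ such that $(c'v_1)\uas(q_{1\sas}\delta)=0$ in $\E(Z',X'')$. Fixing a distinguished realization $X''\xrightarrow{y_0}Y_1\to\cdots\to Y_n\xrightarrow{y_n}X_{n+1}$ of $q_{1\sas}\delta$, the Yoneda-type exactness in Definition~\ref{def1} produces $h_0\colon Z'\to Y_n$ with $y_nh_0=c'v_1$ in $\C$. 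Axiom (MR3) applied to the pair of $n$-exangles for $\delta$ and $q_{1\sas}\delta$ (with endpoint morphisms $q_1$ and $\id_{X_{n+1}}$) supplies $\mathbf{w_n}\in\overline{\mathcal{F}}(X_n,Y_n)$ with $\overline{y_n}\circ\mathbf{w_n}=\overline{x_n}$, and the right Ore condition in (MR2), applied to $\mathbf{w_n}$ and $\overline{h_0}$, yields $\overline{e_1}\in\overline{\mathcal{F}}(R,Z')$ and $\overline{e_2}\colon R\to X_n$ with $\mathbf{w_n}\circ\overline{e_2}=\overline{h_0}\circ\overline{e_1}$. Hence $\overline{x_ne_2}=\overline{y_nw_ne_2}=\overline{y_nh_0e_1}=\overline{c'v_1e_1}$, and setting $\beta:=\overline{Q}(\overline{e_2})\circ\overline{Q}(\overline{vt'v_1e_1})^{-1}\in\widetilde{\C}(Y,X_n)$, the identity $\overline{tc'}=\overline{ct'}$ together with cancellation of $\overline{Q}(\overline{t'v_1e_1})$ give $Q(tx_n)\circ\beta=\alpha$, which is the required factorization.

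Part (2) is proved by the symmetric argument, in which the left Ore condition in (MR2) and the dual formulation of (MR3) take over the roles played above, and Definition~\ref{def1}(2) replaces Definition~\ref{def1}(1). The main technical obstacle throughout is the multi-layered bookkeeping: one must translate the vanishing of a $\widetilde{\E}$-class down through $\overline{\E}$ into a genuine vanishing in $\E$, track the $\overline{\mathcal{F}}$-membership of each auxiliary morphism produced by the common-denominator construction (invoking (MR1) repeatedly), and only then deploy the underlying Yoneda-type exactness of an $n$-exangle in $\C$ together with (MR2) and (MR3) to assemble the factorization.
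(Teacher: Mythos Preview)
Your argument is correct, but it takes a longer route than the paper's. The paper pushes the reduction via the isomorphisms $Q(s)$ and $Q(t)$ one step further: it replaces the entire final exactness check by the corresponding one for the simpler extension $[\overline{\id}\backslash\overline{\delta}/\overline{\id}]\in\widetilde{\mathbb{E}}(X_{n+1},X_0)$, by means of the commutative ladder
\[
\xymatrix@C=1.6cm{
\widetilde{\C}(-,X_n)\ar@{=}[d]\ar[r]^{\widetilde{\C}(-,\,Q(tx_n))} & \widetilde{\C}(-,C)\ar[r]^{[\overline{t}\backslash\overline{\delta}/\overline{s}]\ssh} & \widetilde{\mathbb{E}}(-,A)\ar[d]^{Q(s)_\ast}_{\cong}\\
\widetilde{\C}(-,X_n)\ar[r]_{\widetilde{\C}(-,\,Q(x_n))} & \widetilde{\C}(-,X_{n+1})\ar[u]^{\cong}_{\widetilde{\C}(-,\,Q(t))}\ar[r]_{[\overline{\id}\backslash\overline{\delta}/\overline{\id}]\ssh} & \widetilde{\mathbb{E}}(-,X_0).
}
\]
After this reduction, for $\beta=\overline{Q}(\overline{f})\circ\overline{Q}(\overline{g})^{-1}$ with $\beta\uas[\overline{\id}\backslash\overline{\delta}/\overline{\id}]=[\overline{g}\backslash\overline{f\uas\delta}/\overline{\id}]=0$, Remark~\ref{rem2} immediately yields $\overline{f\uas\delta}=0$, hence $(fm)\uas\delta=0$ for some $m\in\mathcal{F}$, and Yoneda exactness of the \emph{original} $n$-exangle $\Xd$ gives $h$ with $x_nh=fm$; then $\overline{Q}(\overline{h})\circ\overline{Q}(\overline{gm})^{-1}$ is the desired preimage. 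In particular the paper's proof never invokes {\rm(MR3)} nor a second Ore-condition step. Your detour through the pushed-out $n$-exangle for $q_{1\ast}\delta$, the application of {\rm(MR3)} to obtain $\mathbf{w_n}$, and the additional Ore square to land back in $X_n$ all work, but they are unnecessary once the reduction to $[\overline{\id}\backslash\overline{\delta}/\overline{\id}]$ is carried out; the gain of the paper's approach is that it isolates a single use of the $n$-exangle axiom for $\delta$ itself, keeping the bookkeeping to a minimum.
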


\begin{proof}
We only prove that $(1)$, dually one can prove $(2)$.

Consider the commutativity of the following diagram
$$\xymatrix@C=2.6cm{
\widetilde{\C}(-,A)\ar[r]^{\widetilde{\C}(-,\  Q(x_0\circ s))}\ar[d]^{\widetilde{\C}(-,\  Q(s))}_\cong& \widetilde{\C}(-,X_1) \ar[r]^{\widetilde{\C}(-,\  Q(x_1))}\ar@{=}[d]^{} & \widetilde{\C}(-,X_2) \ar[r]^{\widetilde{\C}(-,\  Q(x_2))}\ar@{=}[d]^{}& \cdots  &\\
\widetilde{\C}(-,X_0)\ar[r]_{\widetilde{\C}(-,\  Q(x_0))}&\widetilde{\C}(-,X_1)\ar[r]_{\widetilde{\C}(-,\  Q(x_1))} & \widetilde{\C}(-,X_2)\ar[r]_{\widetilde{\C}(-,\  Q(x_2))}  & \cdots &}$$
$$\xymatrix@C=2.6cm{\cdots\ar[r]^{\widetilde{\C}(-,\  Q(x_{n-1}))}&\widetilde{\C}(-,X_{n}) \ar@{=}[d]^{}\ar[r]^{\widetilde{\C}(-,\ Q(t\circ x_{n}))}&\widetilde{\C}(-,C) \ar[r]^{{[\overline{ t}\backslash \overline{ \delta}/\overline{ s}]\ssh}}&\widetilde{\mathbb{E}}(-,A)\ar@{}[dr]\ar[d]^{Q(s)_\ast}_\cong &\\
\cdots\ar[r]_{\widetilde{\C}(-,\  Q(x_{n-1}))} &\widetilde{\C}(-,X_n)\ar[r]_{\widetilde{\C}(-,\ Q(x_{n}))}& \widetilde{\C}(-,X_{n+1})\ar[u]_{\widetilde{\C}(-,\ Q(t))}^{\cong}\ar[r]_{{[\overline{ \id}\backslash \overline{ \delta}/\overline{ \id}]\ssh}}&\widetilde{\mathbb{E}}(-,X_0) &}
$$
In order to prove the exactness of the top row, it suffices to show the exactness of the bottom row. Since any distinguished $n$-exangle in $\C$ induces a weak kernel-cokernel sequence in $\widetilde{\mathcal{C}}$ by assumption, the sequence
$$
\widetilde{\C}(-,X_0)\xrightarrow{\widetilde{\C}(-,\  Q(x_0))}\widetilde{\C}(-,X_1)\xrightarrow{\widetilde{\C}(-,\ Q(x_1))}\cdots\xrightarrow{\widetilde{\C}(-,\ Q(x_{n-1}))}\widetilde{\C}(-,X_n)\xrightarrow{\widetilde{\C}(-,\ Q(x_n))}\widetilde{\C}(-,X_{n+1})
$$
is exact. For any $P\in \widetilde{\C}$, we only need to show the exactness of
$$
\widetilde{\C}(P,X_n)\xrightarrow{\widetilde{\C}(P,\ Q(x_n))}\widetilde{\C}(P,X_{n+1})\xrightarrow{{[\overline{ \id}\backslash \overline{ \delta}/\overline{ \id}]\ssh}}\widetilde{\mathbb{E}}(P,X_{0}).
$$

It is clear that ${\rm Im}~\widetilde{\C}(P,\ Q(x_n))\subseteq{\rm Ker}~{[\overline{ \id}\backslash \overline{ \delta}/\overline{ \id}]\ssh}$. We show ${\rm Ker}~{[\overline{ \id}\backslash \overline{ \delta}/\overline{ \id}]\ssh}\subseteq{\rm Im}~\widetilde{\C}(P,\ Q(x_n))$. Suppose that $\beta\in\widetilde{\C}(P,X_{n+1})$ satisfies $[\overline{ \id}\backslash \overline{ \delta}/\overline{ \id}]\ssh(\beta)=\beta\uas[\overline{ \id}\backslash \overline{ \delta}/\overline{ \id}]=0.$ By Remark \ref{rem4}, we can write $\beta=\overline{Q}(\overline{f})\circ\overline{Q}(\overline{g})^{-1}$ by some pair of morphism
$P\ov{\overline g}{\Longleftarrow}P'\ov{\overline f}{\rightarrow}X_{n+1}$. Then $\beta\uas[\overline{ \id}\backslash \overline{ \delta}/\overline{ \id}]=[\overline{ g}\backslash \overline{ f\uas\delta}/\overline{ \id}]$ by definition. It follows $\overline{f\uas\delta}=0$, hence ${m\uas(f\uas\delta)}=0$ for some $m\in\overline{\mathcal{F}}(P'',P')$ by Remark \ref{rem2}. Note that the following exact sequence
$${\C}(P'',X_n)\xrightarrow{{\C}(P'',x_n)}{\C}(P'',X_{n+1})\xrightarrow{ \delta\ssh}{\mathbb{E}}(P'',X_{0}),
$$
there exists $h\in \C(P'',X_{n})$ such that $x_n\circ h=f\circ m$. Set $\alpha={\overline Q}{ (\overline h)}{\overline Q}(\overline {gm})^{-1}\in\widetilde{\C}(P,X_{n}) $, so we have $Q(x_{n})\alpha=\beta$.
\end{proof}

{\bf Now we are ready to prove Theorem \ref{th}.}

\begin{proof}
(1) { {\bf Necessity.~} It is trivial.}

{ {\bf Sufficiency.~}} (C1) follows from Proposition \ref {pro6}. (C2) and (C2') are obvious from the definition. We only need to prove (C3), dually, one can prove (C3').

Let $[\overline{ t}\backslash \overline{ \delta}/\overline{ s}]=[C\ov{\overline t}{\Longleftarrow}X_{n+1}\ov{\overline \delta}{\dashrightarrow}X_0\ov{\overline s}{\Longleftarrow}A]\in\widetilde{\mathbb{E}}(C,A) $ be any $\widetilde{\mathbb{E}}$-extension and $\alpha\in\widetilde{\C}(A,A')$ be any morphism. By Remark \ref{rem4}, we can write $\alpha=\overline{Q}(\overline{u})^{-1}\circ\overline{Q}(\overline{a})$ by some pair of morphism
$A\ov{\overline a}{\rightarrow}D\ov{\overline u}{\Longleftarrow}A'$. Then by construction $\alpha_\ast[\overline{ t}\backslash \overline{ \delta}/\overline{ s}]=[\overline{ t}\backslash \overline{ a'_\ast\delta}/\overline{ s'\circ u}]$ is given by using a commutative square in $\overline\C$
$$\xymatrix{A\ar@{}[dr]|-{\circlearrowleft}\ar[r]^{\overline{a}}\ar@{=>}[d]_{\overline{s}}&{D}\ar@{=>}[d]_{\overline{s'}}\\
X_0\ar[r]^{\overline{a'}}&\clubsuit.}$$
Applying (C3) for $(\C,\E,\s)$, there exists the following commutative diagram of $\s$-distinguished $n$-exangles
\begin{equation}\xymatrix{
X_0 \ar[r]^{x_0}\ar[d]^{a'} & X_1 \ar[r]^{x_1}\ar[d]^{b_1} & X_2 \ar[r]^{x_2} \ar[d]^{b_2}& \cdots \ar[r]^{x_{n-1}} & X_{n} \ar[r]^{x_{n}}\ar[d]^{b_n}&X_{n+1}\ar@{=}[d]\ar@{-->}[r]^-{\delta} & \\
\clubsuit \ar[r]^{y_0} & Y_1 \ar[r]^{y_1} & Y_2\ar[r]^{y_2} & \cdots \ar[r]^{y_{n-1}} & Y_n \ar[r]^{y_{n}} & X_{n+1}\ar@{-->}[r]^-{a'_{\ast}\del} & }
\end{equation}
such that
\begin{equation}X_0\xrightarrow{\left[
                                            \begin{smallmatrix}
                                              -x_0 \\
                                              a' \\
                                            \end{smallmatrix}
                                          \right]}
 X_1\oplus \clubsuit\xrightarrow{\left[
                                            \begin{smallmatrix}
                                              -x_{1} & 0 \\
                                              b_{1} & y_0
                                            \end{smallmatrix}
                                          \right]}
X_2\oplus Y_1\xrightarrow{\left[
                                            \begin{smallmatrix}
                                             -x_{2} & 0 \\
                                             b_{2} & y_1
                                            \end{smallmatrix}
                                          \right]}
\cdots\xrightarrow{\left[
                                            \begin{smallmatrix}
                                              -x_{n-1} & 0 \\
                                              b_{n-1} & y_{n-2} \\
                                            \end{smallmatrix}
                                          \right]\ }
X_{n}\oplus Y_{n-1}\xrightarrow{\left[\begin{smallmatrix}
                                            b_{n}, & y_{n-1} \\
                                            \end{smallmatrix}
                                          \right]}
Y_{n}\overset{(y_n){^{\ast}}\delta}{\dashrightarrow}\end{equation}
is an $\s$-distinguished $n$-exangle. By definition we have
$$\widetilde{\s}([\overline{ t}\backslash \overline{ \delta}/\overline{ s}])=[A\xrightarrow{Q(x_0\circ s)}X_1\xrightarrow{Q(x_1)}\cdots\xrightarrow{Q(x_{n-2})}X_{n-1}
\xrightarrow{Q(x_{n-1})}X_n\xrightarrow{Q (t\circ x_n)}C]$$
and
$$\widetilde{\s}(\alpha_\ast[\overline{ t}\backslash \overline{ \delta}/\overline{ s}])=[A'\xrightarrow{Q(y_0\circ s'\circ u)}Y_1\xrightarrow{Q(y_1)}\cdots\xrightarrow{Q(y_{n-2})}Y_{n-1}
\xrightarrow{Q(y_{n-1})}Y_n\xrightarrow{Q (t\circ y_n)}C].$$
Take $x''=y_0\circ s'\circ u$. The commutative diagram of (3.3) induces the following commutative diagram of $\widetilde\s$-distinguished $n$-exangles
$$\xymatrix@C=1.2cm{
A \ar[r]^{Q(x_0\circ s)}\ar[d]^{\alpha} & X_1 \ar[r]^{Q(x_1)}\ar[d]^{Q(b_1)} & X_2 \ar[r]^{Q(x_2)} \ar[d]^{Q(b_2)}& \cdots \ar[r]^{Q(x_{n-2})}& X_{n-1} \ar[r]^{Q(x_{n-1})}\ar[d]^{Q(b_{n-1})} & X_{n} \ar[r]^{Q(t\circ x_{n})}\ar[d]^{Q(b_n)}&C\ar@{=}[d]\ar@{-->}[r]^-{[\overline{ t}\backslash \overline{ \delta}/\overline{ s}]} & \\
A' \ar[r]_{Q(x'')} & Y_1 \ar[r]_{Q(y_1)} & Y_2\ar[r]_{Q(y_2)} & \cdots \ar[r]_{Q(y_{n-2})} & Y_{n-1} \ar[r]_{Q(y_{n-1})}& Y_n \ar[r]_{Q(t\circ y_{n})} & C\ar@{-->}[r]^-{\alpha_\ast[\overline{ t}\backslash \overline{ \delta}/\overline{ s}]} &. }
$$
Since ${Q(t\circ y_{n})}^\ast[\overline{ t}\backslash \overline{ \delta}/\overline{ s}]=[\overline{ \id_{Y_{n}}}\backslash \overline{y_{n}^\ast \delta}/\overline{ s}]$, we need to show that

$$A\xrightarrow{\left[
                                            \begin{smallmatrix}
                                              -Q(x_0\circ s) \\
                                              \alpha \\
                                            \end{smallmatrix}
                                          \right]}
 X_1\oplus A'\xrightarrow{\left[
                                            \begin{smallmatrix}
                                              -Q(x_{1}) & 0 \\
                                              Q(b_{1}) & Q(x'')
                                            \end{smallmatrix}
                                          \right]}
X_2\oplus Y_1\xrightarrow{\left[
                                            \begin{smallmatrix}
                                             -Q(x_{2}) & 0 \\
                                             Q(b_{2}) & Q(y_1)
                                            \end{smallmatrix}
                                          \right]}
\cdots$$
\begin{equation}\cdots\xrightarrow{\left[
                                            \begin{smallmatrix}
                                              -Q(x_{n-1}) & 0 \\
                                             Q( b_{n-1}) & Q(y_{n-2}) \\
                                            \end{smallmatrix}
                                          \right]\ }
X_{n}\oplus Y_{n-1}\xrightarrow{\left[\begin{smallmatrix}
                                            Q(b_{n}), & Q(y_{n-1}) \\
                                            \end{smallmatrix}
                                          \right]}
Y_{n}\mathop{----\to}\limits^{[\overline{ \id_{Y_{n}}}\backslash \overline{y_{n}^\ast \delta}/\overline{ s}]}\end{equation}
is an $\widetilde\s$-distinguished $n$-exangle.

Note that (3.4) is an $\s$-distinguished $n$-exangle, we have

$$ \widetilde\s([\overline{ \id_{Y_{n}}}\backslash \overline{y_{n}^\ast \delta}/\overline{ s}])=[A\xrightarrow{Q(\left[
                                            \begin{smallmatrix}
                                              -x_0 \\
                                              a' \\
                                            \end{smallmatrix}
                                          \right]\circ s)}
 X_1\oplus \clubsuit\xrightarrow{Q(\left[
                                            \begin{smallmatrix}
                                              -x_{1} & 0 \\
                                              b_{1} & y_{0}
                                            \end{smallmatrix}
                                          \right])}
X_2\oplus Y_1\xrightarrow{Q(\left[
                                            \begin{smallmatrix}
                                             -x_{2} & 0 \\
                                             b_{2} & y_1
                                            \end{smallmatrix}
                                          \right])}
\cdots$$
$$\xrightarrow{Q(\left[
                                            \begin{smallmatrix}
                                              -x_{n-1} & 0 \\
                                             b_{n-1} & y_{n-2} \\
                                            \end{smallmatrix}
                                          \right]) }
X_{n}\oplus Y_{n-1}\xrightarrow{Q(\left[\begin{smallmatrix}
                                            b_{n}, & y_{n-1} \\
                                            \end{smallmatrix}
                                          \right])}
Y_{n}]$$
by definition. Hence (3.5) indeed becomes an $\widetilde\s$-distinguished $n$-exangle. In fact, we have the following commutative diagram in $\widetilde\C$
$$\xymatrix@C=2.6cm{
A\ar[r]^{\left[
                                            \begin{smallmatrix}
                                              -Q(x_0\circ s) \\
                                              \alpha \\
                                            \end{smallmatrix}
                                          \right]}\ar@{=}[d]^{}& X_1\oplus A' \ar[r]^{\left[
                                            \begin{smallmatrix}
                                              -Q(x_{1}) & 0 \\
                                              Q(b_{1}) & Q(x'')
                                            \end{smallmatrix}
                                          \right]}\ar[d]^{\beta}_\cong & X_2\oplus Y_1  \ar[r]^{\left[
                                            \begin{smallmatrix}
                                             -Q(x_{2}) & 0 \\
                                             Q(b_{2}) & Q(y_1)
                                            \end{smallmatrix}
                                          \right]}\ar@{=}[d]^{}& \cdots  &\\
A\ar[r]_{Q(\left[
                                            \begin{smallmatrix}
                                              -x_0 \\
                                              a' \\
                                            \end{smallmatrix}
                                          \right]\circ s)}&X_1\oplus \clubsuit\ar[r]_{Q(\left[
                                            \begin{smallmatrix}
                                              -x_{1} & 0 \\
                                              b_{1} & y_{0}
                                            \end{smallmatrix}
                                          \right])} & X_2\oplus Y_1 \ar[r]_{Q(\left[
                                            \begin{smallmatrix}
                                             -x_{2} & 0 \\
                                             b_{2} & y_1
                                            \end{smallmatrix}
                                          \right])}  & \cdots &}$$
$$\xymatrix@C=3.5cm{\cdots \ar[r]^{\left[
                                            \begin{smallmatrix}
                                              -Q(x_{n-2}) & 0 \\
                                             Q( b_{n-2}) & Q(y_{n-3}) \\
                                            \end{smallmatrix}
                                          \right]}&X_{n-1}\oplus Y_{n-2} \ar@{=}[d]^{}\ar[r]^{\left[
                                            \begin{smallmatrix}
                                              -Q(x_{n-1}) & 0 \\
                                             Q( b_{n-1}) & Q(y_{n-2}) \\
                                            \end{smallmatrix}
                                          \right]}&X_{n}\oplus Y_{n-1} \ar@{=}[d]^{} \ar[r]^{\left[\begin{smallmatrix}
                                            Q(b_{n}), & Q(y_{n-1}) \\
                                            \end{smallmatrix}
                                          \right]}&Y_{n}\ar@{=}[d]^{}&\\\cdots
\ar[r]_{Q(\left[
                                            \begin{smallmatrix}
                                              -x_{n-2} & 0 \\
                                             b_{n-2} & y_{n-3} \\
                                            \end{smallmatrix}
                                          \right])} & X_{n-1}\oplus Y_{n-2} \ar[r]_{Q(\left[
                                            \begin{smallmatrix}
                                              -x_{n-1} & 0 \\
                                             b_{n-1} & y_{n-2} \\
                                            \end{smallmatrix}
                                          \right])}& X_{n}\oplus Y_{n-1}\ar[r]_{Q(\left[\begin{smallmatrix}
                                            b_{n}, & y_{n-1} \\
                                            \end{smallmatrix}
                                          \right])}&Y_{n}, &}
$$
where $\beta=\id_{X_1}\oplus Q(s'\circ u)\in\widetilde\C( X_1\oplus A', X_1\oplus \clubsuit)$ is an isomorphism.

(2) It is clear.

\end{proof}

{ \begin{remark}
(1) Let $(\C, \mathbb{E}, \mathfrak{s})$ be an extriangulated category and $\mathcal{F}\subseteq \mathcal{M}_{\C}$ satisfies {\rm(M0)}.
Suppose that $\overline{\mathcal{F}}$ satisfies {\rm(MR1)}, {\rm(MR2)} and {\rm(MR3)}, then
any $\E$-triangle in $\C$ induces a weak kernel-cokernel sequence in $\widetilde{\C}$. Indeed,
assume that $A\xrightarrow{~\alpha_0~}B\xrightarrow{~\alpha_1~}C\overset{\delta}{\dashrightarrow}$
is an $\E$-triangle in $\C$. We know that
$$
\widetilde{\C}(-,A)\xrightarrow{\widetilde{\C}(-\, Q({\alpha_0}))}\widetilde{\C}(-,B)\xrightarrow{\widetilde{\C}(-\,Q({\alpha_1}))}\widetilde{\C}(-,C)
$$
and
$$
\widetilde{\C}(C,-)\xrightarrow{\widetilde{\C}(Q({\alpha_1}),\ -)}\widetilde{\C}(B,-)\xrightarrow{\widetilde{\C}(Q({\alpha_0}),\ -)}\widetilde{\C}(A,-)
$$
are exact by the proof of \cite[Proposition 3.31]{HYA}. By Theorem \ref{th}, we have that the localization of $\C$ by $\mathcal{F}$ gives a weakly extriangulated category $(\widetilde{\C}, \widetilde{\mathbb{E}}, \widetilde{\mathfrak{s}})$.

This result is just the Theorem 3.5 (1) in \cite{HYA}. Hence our result recovers and extends a result of Nakaoka-Ogawa-Sakai \cite[Theorem 3.5 (1)]{HYA}.

(2) When can the weakly extriangulated category $\widetilde{\C}$ become an extriangulated category, Nakaoka-Ogawa-Sakai  gave a sufficient condition, i.e. if $\mathcal{F}$ moreover satisfies {\rm(MR4)}, then $(\widetilde{\C}, \widetilde{\mathbb{E}}, \widetilde{\mathfrak{s}})$ is an extriangulated category, where
{\rm(MR4)} is as follows.

(MR4) $\mathbf{\overline{\mathcal{M}}_\textrm{inf}}:= \{\mathbf{v} \circ \overline{x} \circ \mathbf{u} ~| ~x ~\textrm{is ~an}~ \mathfrak{s}\textrm {-inflation}, ~\mathbf{u}, ~\mathbf{v} \in \overline{\mathcal{F}}\}\subseteq \overline{\mathcal{M}}_{\C}$ is closed by compositions. Dually, $\mathbf{\overline{\mathcal{M}}_\textrm{def}}:= \{\mathbf{v} \circ \overline{y} \circ \mathbf{u}~ |~ y~ \textrm{is~an}~ \mathfrak{s}\textrm {-deflation}, ~\mathbf{u}, ~\mathbf{v} ~\in  \overline{\mathcal{F}}\}\subseteq \overline{\mathcal{M}}_{\C}$ is closed
by compositions.

Let $\C$ be a weakly extriangulated category and $(a,b,c)$ be any morphism of $\mathbb{E}$-triangles.  Note that, in \cite{HYA}, one of the key arguments in the proof is that if $a,c$ are isomorphisms, then so is $b$, (for more details, see \cite[Lemma 3.32 and Theorem 3.5 (3)]{HYA} ). While in our general context we do not have this fact and therefore we just assume that $\widetilde{\C}$ satisfies {\rm(C4)}, moreover we also have a class of examples to explain our result, see Example \ref {example:3.0}.
\end{remark}}

\begin{example}\label{example:3.0}
$n$-exangulated categories obtained by ideal quotients can be viewed as a particular type of the localization under the certain condition. More precisely, let $(\C, \mathbb{E}, \mathfrak{s})$ be an $n$-exangulated category and $\I\subseteq{\C}$ be any full additive subcategory closed by isomorphisms and direct summands, whose objects are both projective and injective. Let $p:\C\rightarrow{\C}/[\I]$ denote the ideal quotient. If we take $\mathcal{F}=p^{-1}(\textrm{Iso}~({\C}/[\I]))$, then it is easy to see that the ideal $[\mathcal{N}_{\mathcal{F}}]$ coincides with $[\I]$. As shown in \cite[Theorem 3.1]{HZZ2}, if any distinguished $n$-exangle in $\C$ induces a weak kernel-cokernel sequence in $\overline{\mathcal{C}}$, then $\overline{\mathcal{C}}$ has a natural $n$-exangulated structure $(\overline{\mathcal{C}},\overline{\mathbb{E}},\overline{\mathfrak{s}})$ given by
\begin{itemize}
\item $\overline{\E}(C,A)=\E(C,A)$ for any $A,C\in\C$,

\item For any $\overline{\E}$-extension $\delta\in\overline{\E}(C,A)={ \E}(C,A)$, define
$$\overline{\s}(\delta)=\overline{\s(\delta)}=[A\xrightarrow{~\overline{\alpha_0}~}
B^1\xrightarrow{~\overline{\alpha_1}~}
B^2\xrightarrow{~\overline{\alpha_2}~}\cdots\xrightarrow{~\overline{\alpha_{n-1}}~}B^n\xrightarrow{~\overline{\alpha_{n}}~}C]$$
using $\s(\delta)=[A\xrightarrow{~\alpha_0~}
B^1\xrightarrow{~\alpha_1~}
B^2\xrightarrow{~\alpha_2~}\cdots\xrightarrow{~\alpha_{n-1}~}B^n\xrightarrow{\alpha_{n}}C]$.
\smallskip
\end{itemize}
It is obvious that $\phi=\{\phi_{C,A}=\id: \mathbb{E}(C,A)\longrightarrow\overline{\mathbb{E}}(C,A)   \}_{C,A\in\C}$ gives an exact functor $(p,\phi):(\C,\E,\s)\longrightarrow(\overline\C,\overline\E,\overline\s)$.

As localization by ${\mathcal{F}}$ does not change $(\overline{\mathcal{C}},\overline{\mathbb{E}},\overline{\mathfrak{s}})$ essentially, and there is an equivalence of $n$-exangulated categories $(\overline{\mathcal{C}},\overline{\mathbb{E}},\overline{\mathfrak{s}})\xrightarrow{\simeq}(\widetilde{\mathcal{C}},\widetilde{\mathbb{E}},\widetilde{\mathfrak{s}})$ in the sense of Proposition \ref{rrr}.
\end{example}

{ We revisit Example 3.3 presented in \cite{HZZ2}, which shows that the localization $\widetilde{\mathcal{C}}$ in Theorem \ref{th} is not an $n$-exangulated category in general, even in the special case of ideal quotients.  }

\begin{example}\label{example:3.4}
Let $\Lambda$ be the path algebra of the
quiver
$$1\xrightarrow{~\alpha~}2\xrightarrow{~\beta~}3\xrightarrow{~\gamma~}4$$
with relation $\alpha\beta\gamma=0$.
Then $\mod\Lambda$ has a unique $2$-cluster tilting subcategory $\C$ consisting of all direct
sums of projective modules and injective modules. By \cite[Theorem 3.16]{Ja}, we know that $\C$ is a $2$-abelian category which can be viewed as
a $2$-exangulated category.
The Auslander-Reiten quiver of $\mod\Lambda$ is the following
$$\xymatrix@C=0.7cm@R0.7cm{
&&&{\begin{smallmatrix}
2\\  3\\4
\end{smallmatrix}} \ar[dr]
&&{\begin{smallmatrix}
1\\2\\3
\end{smallmatrix}} \ar[dr]
&& \\
&&{\begin{smallmatrix}
3\\4
\end{smallmatrix}}\ar[dr] \ar[ur]
&& {\begin{smallmatrix}
2\\3
\end{smallmatrix}} \ar[dr] \ar[ur]
&& {\begin{smallmatrix}
1\\2
\end{smallmatrix}} \ar[dr] \\
&{\begin{smallmatrix}
4
\end{smallmatrix}}\ar[ur]
&&{\begin{smallmatrix}
3
\end{smallmatrix}} \ar[ur]
&&  {\begin{smallmatrix}
2
\end{smallmatrix}} \ar[ur]
&& {\begin{smallmatrix}
1
\end{smallmatrix}}}$$
That is $\C:=\add\{\begin{smallmatrix}
4
\end{smallmatrix}, \begin{smallmatrix}
3\\ 4
\end{smallmatrix},\begin{smallmatrix}
2\\ 3\\ 4
\end{smallmatrix},\begin{smallmatrix}
1\\ 2 \\3
\end{smallmatrix},\begin{smallmatrix}
1\\ 2
\end{smallmatrix},\begin{smallmatrix}
1
\end{smallmatrix}\}$. Take
$$\mathcal X:=\add\{\begin{smallmatrix}
2\\ 3\\ 4
\end{smallmatrix}\}\subseteq \add\{ \begin{smallmatrix}
2\\ 3 \\4
\end{smallmatrix}, \begin{smallmatrix}
1\\ 2 \\ 3
\end{smallmatrix}\}=\P\cap\I.$$
Note that $\begin{smallmatrix}
4
\end{smallmatrix}\to \begin{smallmatrix}
2\\ 3\\ 4
\end{smallmatrix}\to \begin{smallmatrix}
1\\ 2
\end{smallmatrix}\to \begin{smallmatrix}
1
\end{smallmatrix}$
is a $2$-exact sequence in $\C$, { but it induces the sequence
$$\begin{smallmatrix}
4
\end{smallmatrix}\to \begin{smallmatrix}
0
\end{smallmatrix}\to \begin{smallmatrix}
1\\ 2
\end{smallmatrix}\to \begin{smallmatrix}
1
\end{smallmatrix}$$
which is not a weak kernel-cokernel sequence in $\C/\mathcal X$.
Therefore $\C/\mathcal X$ equipped with $\overline{\mathbb{E}}$ and
$\overline{\s}$ is not a $2$-exangulated category by \cite[Theorem 3.1]{HZZ2}}.
\end{example}

\end{document}